\title{Relative coherent modules}
\date{}
\author{}
\newtheorem{thm}{Theorem}[section]
 \newtheorem{cor}[thm]{Corollary}
 \newtheorem{lem}[thm]{Lemma}
 \newtheorem{prop}[thm]{Proposition}
 \newtheorem{Def}[thm]{Definition}
 \newtheorem{exs}[thm]{Examples}
\newcommand{\X}{\rm \mathscr{X}}
\newcommand{\Y}{\rm \mathscr{Y}}
\def\Tor{{\rm Tor}}
\def\Hom{{\rm Hom}}
\def\Im{{\rm Im}}
\def\Ker{{\rm Ker}}
\def\Coker{{\rm Coker}}
\begin{document}

\thispagestyle{empty}

\maketitle \vspace*{-1.5cm}
\begin{center}{\large\bf  Mostafa Amini$^{1}$, Arij Benkhadra$^{2,a}$, Driss   Bennis$^{2b}$ and Mohammed El
Hajoui$^{3}$}
\bigskip

\small{1.  Department of Mathematics, Payame Noor University, Tehran, Iran.\\
E-mail: amini.pnu1356@gmail.com}\\

 \small{2. Faculty of Sciences,
 Mohammed V  University in Rabat,   Rabat, Morocco.\\
$\mathbf{a.}$ benkhadra.arij@gmail.com\\
$\mathbf{b.}$ driss.bennis@um5.ac.ma; driss$\_$bennis@hotmail.com
}\\
\small{3. Facult\'{e} Polydisciplinaire de Taroudant, Universit\'{e}  Ibn Zohr, Agadir, Maroc.\\
 hajoui4@yahoo.fr}
\end{center}

\bigskip

\noindent{\large\bf Abstract.}
 Several authors have introduced various type of coherent-like rings and proved analogous results on these rings. It appears that all these relative
 coherent
 rings and all the used techniques can be unified. In \cite{D.Benn}, several coherent-like rings are unified. In this manuscript we continue this work
 and we
 introduce coherent-like module which  also emphasizes our point of view by unifying the existed relative coherent concepts. Several classical results
 are
 generalized and some new results are given.\bigskip

\small{\noindent{\bf Key Words.}   $n$-$\X$-coherent modules,  $n$-$\X$-coherent rings }\medskip

\small{\noindent{\bf 2010 Mathematics Subject Classification.} 16D80, 16E05, 16E30, 16E65, 16P70}
\bigskip\bigskip
%

\section{Introduction}
\ \ \
Throughout this paper $R$ will be an associative (non necessarily commutative) ring with identity, and all modules will be unital left $R$-modules (unless specified otherwise).
 In this section, first some fundamental concepts and notations are stated.
 Let  $n$ be a non-negative integer and $M$ an $R$-module. Then
$M$ is said to be $n$-{\it presented}   if there is an exact sequence of   $R$-modules
 $ F_{n}\rightarrow F_{n-1}\rightarrow\dots\rightarrow F_1\rightarrow F_0\rightarrow
M\rightarrow 0$ , where each $F_i$ is a finitely generated free module. In particular, $0$-presented and $1$-presented modules are finitely generated and finitely presented modules, respectively.
$M$ is said to be infinitely presented   if it is $n$-presented  for every positive integer $n$.
A ring $R$ is called (left) {\it coherent}, if every finitely generated (left)  ideal is finitely presented, equivalently every finitely presented $R$-module is $2$-presented and so infinitely presented. The coherent rings were first appeared in Chase's paper \cite{Chase} without being mentioned by name. The term coherent was first used by Bourbaki in \cite{Bou}. Since then, coherent rings have became a vigorously active area of research. For background on coherence for commutative rings, we refer the reader to \cite{Glaz}.
A ring $R$ is called (left)  {\it $n$-coherent} ring if every  $(n-1)$-presented (left)   ideal is $n$-presented, equivalently every  $n$-presented  $R$-module
is $(n+1)$-presented. Also, it is clear that $0$-coherent (resp, $1$-coherent) rings are just Noetherian (resp; coherent) rings. The $n$-coherent rings by Costa in \cite{Costa} introduced, for more details see \cite{D.Benn, DKM, W.O, NG, wu}.
In \cite{DKM}, Kabbaj et al. introduced the concept of $n$-coherent modules, and  $M$ is called  $n$-coherent module  if it is $(n-1)$-presented and every $(n-1)$-presented submodule of $M$ is $n$-presented, the $1$-coherent modules are just the coherent modules, see \cite{Bou}.\\
\ \ \ In this paper, we introduce the {\it $n$-$\X$- coherent} modules.   Let $n$ be an integer, $M$  an $R$-module and $\X$ a class of submodules of $M$.
	Then,  $M$ is said to be {\it $n$-$\X$-coherent}  if $\X_{n-1}$ is non empty and every submodule of $\X_{n-1}$ is in $\X_{n}$, where $\X_{n-1}$ and $\X_{n}$ are two classes of $(n-1)$-presented modules and $n$-presented modules in $\X$, respectively. In particular, if $\X$ is a class of $R$-modules and $M=R$, then $R$ is said to be an {\it $n$-$\X$-coherent} ring  if  every $R$-module of $\X_{n}$ is in $\X_{n+1}$ (see \cite{D.Benn}).
Our main aim is to show that the well-known Glaz, Smaili, Dobbes, Mahdou,  Kabbaj, Chase, Greenberg and Scrivanti characterization of coherent modules and coherent rings hold true for any $n$-$\X$-coherent module and any $n$-$\X$-coherent ring. So, in Section 2, first we study some results of  $n$-$\X$-coherent modules on short exact sequences, factor modules, homomorphism of $R$-modules and direct sum of $R$-modules. Also in this section, several results on transfer of $n$-$\X$-coherence are developed and then in end, another characterizations of $n$-$\X$-coherence using the notion of thickness are given (see Theorems \ref{thm-coh}, \ref{thm-product-coherence}, \ref{thm-transfn}, \ref{tensor},  \ref{faith-flat}, \ref{ideals-charac}, \ref{harac} and Proposition \ref{thickness}). Finally, in Section 3, with considering pullback diagram, some characterizations of $n$-$\X$-coherent rings are studied (see Theorems \ref{Thm-Pullback-Coherence} and \ref{Thm-Pullback-Coherence2}).

\section{$n$-$\X$-coherent modules}
\ \ \  Among the many generalizations of the notion of a coherent ring, we recall the following one: $R$ is said to be (left) $J$-coherent, if every finitely generated  (left) ideal of $R$ contained in $Rad(R)$, the radical of $R$, is finitely presented \cite{DLM}. Also, $R$ is said to be  (left) $Nil_{*}$-coherent, if every finitely generated  (left) ideal of $R$ contained in $Nil(R)$, the nilradical of $R$, is finitely presented \cite{NilXO}. Here, we introduce the following definition of coherence which generalizes all the definitions above.

\begin{Def}
    Let $n$ be an integer, $M$  an $R$-module and $\X$ a class of submodules of $M$.  Let $\X_{n-1}$ and $\X_{n}$ be two classes of $(n-1)$-presented modules and $n$-presented modules in $\X$, respectively.
	We say that $M$ is  (left) $n$-$\X$-coherent, if $\X_{n-1}$ is non empty and every  module of $\X_{n-1}$ is in $\X_{n}$.
\end{Def}
\begin{exs}\label{exs}
\begin{enumerate}
{
\item [\rm (1)]If $\X$ is the class of all submodules of $M$ and $n=1$, then $M$ is $n$-$\X$-coherent if and only if it is pseudo coherent. If, in addition, $M$ is finitely generated then $M$ is $n$-$\X$-coherent if and only if it is coherent (see \cite{Harris}).
 \item [\rm (2)]If $\X$ is the class of all submodules of $M$ contained in $Nil(R)M$ and $n=1$, then $M$ is $n$-$\X$-coherent if and only if it is Nil$_{*}$-coherent.
 \item [\rm (3)] Let $R$ be a semisimple ring and let $\X$ be any non empty class of  submodules of an $R$-module $M$. Then, $M$ is $n$-$\X$-coherent for every integer $n$.
 \item [\rm (4)] Let $M$ be an $R$-module  and let $\X$ be a class of all finitely generated projective submodules of $M$. Then, $M$ is $n$-$\X$-coherent.
 \item [\rm (5)]
Let $K$ be a field and $E$ be a $k$-vector space with infinite rank. Consider $R=K\propto E$ the trivial extension of $K$ by $E$. If $\X$ is the class of all $2$-presented $R$-submodules of $M$, then $M$ is $n$-$\X$-coherent,  since every $2$-presented $R$-submodule of $M$ is projective. But, if $\X^i$ is the class of all $1$-presented $R$-submodules of any desirable $R$-module $M_i$, then there is an $R$-module $M_i$ such that  $M_i$  is not $2$-$\X^i$-coherent, since if any $M_i$ is  $2$-$\X^i$-coherent, then $R$ is regular, a contradiction (see \cite{N.I}).
 \item [\rm (6)]
Let $R_{n+1}=R_n\propto M_n$ be the  trivial extension, where $R_i$ is a non-noetherian commutative ring for any $i\geq 0$. Consider $M_0=\frac{R_0}{I}$ for a finitely generated ideal $I$ of $R_0$. If $\X$ is the class of all  $R$-submodules of $M_{n+1}=\frac{R_{n+1}}{M_n}$, then $M_{n+1}$ is not $(n+2)$-$\X$-coheren for every $n\geq 0$ (see \cite{wu}). }
\end{enumerate}
\end{exs}
For a morphism $\varphi:A\rightarrow B$ and a class $\X$ of submodules of $A$, we denote by $\varphi(\X)$ the class of submodules of $B$ of the form $\varphi(N)$ with $N$ in $\X$.\\
The following theorem is a generalization of  \cite[Theorem 2.2.1]{Glaz}.

\begin{thm}\label{thm-coh}
Let $0\longrightarrow M_1 \stackrel{h}{\longrightarrow} M_2 \stackrel{s}{\longrightarrow} M_3\longrightarrow 0$
be an exact sequence of $R$-modules and
  $\X$ and $\Y$ two classes of submodules of $M_1$ and $M_2$, respectively. Then,
\begin{enumerate}
\item [\rm (1)]   $M_{2}$ is $n$-$\Y$-coherent, if $M_{3}$ is $n$-$s(\Y)$-coherent and $M_{1}$ is $n$-coherent.
 \item [\rm (2)] $M_{1}$ is $n$-$\X$-coherent if $M_{2}$ is $n$-$h(\X)$-coherent.
\item [\rm (3)] $M_{3}$ is $n$-$s(\Y)$-coherent if $M_{2}$ is $n$-$\Y$-coherent and $h(M_{1})$ is $(n-1)$-presented in $\Y$.
\end{enumerate}
\end{thm}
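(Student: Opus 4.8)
The strategy is to relativise the classical argument for \cite[Theorem~2.2.1]{Glaz}. The workhorse is the standard lemma on $m$-presented modules: for an exact sequence $0\to A\to B\to C\to 0$, (i) if $B$ is $m$-presented and $C$ is $(m+1)$-presented, then $A$ is $m$-presented; (ii) if $A$ and $C$ are $m$-presented, then $B$ is $m$-presented; (iii) if $A$ is $(m-1)$-presented and $B$ is $m$-presented, then $C$ is $m$-presented. I would also use two elementary facts: since $h$ is a monomorphism it matches the submodules of $M_1$ in $\X$ bijectively with the submodules of $M_2$ in $h(\X)$ and preserves the level of presentation; and for a submodule $P\subseteq M_2$ there are exact sequences $0\to P\cap h(M_1)\to P\xrightarrow{s}s(P)\to 0$ and, for $Q=s(P)$, $0\to h(M_1)\to s^{-1}(Q)\to Q\to 0$ in which $s^{-1}(Q)=P+h(M_1)$.

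Part (2) is then immediate from the first fact: $\X_{n-1}$ corresponds to $h(\X)_{n-1}$, hence is non-empty, and for $N\in\X_{n-1}$ one has $h(N)\in h(\X)_{n-1}\subseteq h(\X)_n$, so $N\cong h(N)$ is $n$-presented, i.e. $N\in\X_n$. For part (3), take $Q\in(s(\Y))_{n-1}$, say $Q=s(P)$ with $P\in\Y$; the class $(s(\Y))_{n-1}$ is non-empty because $s(h(M_1))=0$ (as $h(M_1)\in\Y$) lies in it. As $h(M_1)\in\Y_{n-1}$, the $n$-$\Y$-coherence of $M_2$ makes $h(M_1)$ $n$-presented, so (ii) applied to $0\to h(M_1)\to s^{-1}(Q)\to Q\to 0$ (with $Q$ $(n-1)$-presented) shows $s^{-1}(Q)$ is $(n-1)$-presented; once $s^{-1}(Q)=P+h(M_1)$ is seen to lie in $\Y$, the $n$-$\Y$-coherence of $M_2$ upgrades it to $n$-presented, and then (iii) gives that $Q$ is $n$-presented, i.e. $Q\in(s(\Y))_n$.

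For part (1), take $P\in\Y_{n-1}$ and use $0\to P\cap h(M_1)\to P\xrightarrow{s}s(P)\to 0$: if both $s(P)$ and $P\cap h(M_1)$ are shown to be $n$-presented, then (ii) makes $P$ $n$-presented. Now $s(P)\in s(\Y)$, so once $s(P)$ is known to be $(n-1)$-presented the $n$-$s(\Y)$-coherence of $M_3$ makes it $n$-presented; and $P\cap h(M_1)$ is a submodule of $h(M_1)\cong M_1$, so once it is known to be $(n-1)$-presented the $n$-coherence of $M_1$ makes it $n$-presented. When $n=1$ this closes at once, because $s(P)$ is a quotient of the finitely generated module $P$, hence finitely generated, whence $P\cap h(M_1)=\ker(P\to s(P))$ is finitely generated by (i). For $n>1$ the two $(n-1)$-presentedness assertions must be handled simultaneously: $s(P)$ is $(n-1)$-presented once $P\cap h(M_1)$ is $(n-2)$-presented (by (iii)), and conversely $P\cap h(M_1)$ is $(n-1)$-presented once $s(P)$ is $n$-presented (by (i)). Non-emptiness of $\Y_{n-1}$ follows by running the same analysis on a witness of $(s(\Y))_{n-1}\neq\emptyset$.

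The main obstacle is the interface between the relative classes and the homological lemma. For (3) it is a closure issue: the preimage $s^{-1}(Q)=P+h(M_1)$ must again belong to $\Y$ so that the $n$-$\Y$-coherence of $M_2$ can be applied to it, which is automatic whenever $\Y$ is stable under adding the fixed submodule $h(M_1)$ — and this holds for the classes in Examples~\ref{exs}. For (1) it is a bootstrapping issue: the deductions must be ordered so that the coherence of $M_3$ (resp. of $M_1$) is invoked only on a module already known to be at the required presentation level, the delicate initial step being to see that $P\cap h(M_1)$ is $(n-2)$-presented; this is where the hypotheses (and, if needed, stability of $\Y$ under submodules) are used in full. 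Once these points are pinned down, what remains is the routine homological bookkeeping above.
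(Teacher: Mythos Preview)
Your approach is essentially the paper's: part~(2) is identical; for part~(3) the paper also passes to $s^{-1}(N_3)$ and uses an explicit horseshoe diagram where you invoke the abstract lemma (ii)--(iii), which amounts to the same computation; for part~(1) the paper also uses the sequence $0\to K\to N_2\to s(N_2)\to 0$ with $K=N_2\cap h(M_1)$ and argues exactly as you outline.

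The two subtleties you flag are handled in the paper by assertion rather than argument. In~(3) the paper writes ``$s^{-1}(N_3)$ is in $s^{-1}(s(\Y))$ and so it is in $\Y$'', i.e.\ it tacitly uses the closure $s^{-1}(s(\Y))\subseteq\Y$ that you identify as needed. In~(1) the paper simply states ``since $N_2$ is $(n-1)$-presented, $s(N_2)$ is $(n-1)$-presented'' and ``$K$ is $(n-1)$-presented'', bypassing the bootstrapping you describe; your observation that for $n\ge 2$ these claims are not automatic (a quotient of an $(n-1)$-presented module need not be $(n-1)$-presented without control on the kernel) is well taken, and the paper does not supply the missing induction either. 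So your proposal is at least as complete as the paper's own proof, and more honest about where the argument leans on unstated closure properties of~$\Y$.
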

\begin{proof}
  {(1) Let $N_{2}$ be an $(n-1)$-presented submodule in $\Y$. Our aim is to prove that $N_{2}$ is $n$-presented.   For that, consider the following commutative diagram with exact rows and columns:
       \begin{displaymath}
\xymatrix{ &0 \ar[d]^{}& 0 \ar[d]^{}&0 \ar[d]^{}& \\
0 \ar[r]& K \ar[r]^{}\ar[d]^{}& N_2 \ar[r]^{}\ar[d]^{}& s(N_2) \ar[r]^{} \ar[d]^{} & 0\\
0\ar[r]& M_1 \ar[r]_{}& M_2 \ar[r]_{}& M_3 \ar[r]_{}& 0 }
\end{displaymath}
        In view of the exactness of the first row, it suffices to show that both $K$ and $s(N_{2})$ are $n$-presented.
        Since $N_{2}$ is an $(n-1)$-presented module in $\Y$, $s(N_2)$ is an $(n-1)$-presented module in $s(\Y)$, so $s(N_2)$ is $n$-presented.
        By the exactness of the top row, $K$ is an $(n-1)$-presented submodule of $M_1$ which is $n$-coherent. Then $K$ is $n$-presented.

   (2)  Let $N_{1}$ be an $(n-1)$-presented submodule in $\X$.
         Since $h$ is injective, $h(N_1)\cong N_1$ which is $(n-1)$-presented, so $h(N_1)$ is an $(n-1)$-presented module of $h(\X)$, and then by hypothesis, $h(N_1)$ is $n$-presented, so is $N_1$.

    (3) Let $N_{3}$ be an $(n-1)$-presented  submodule in $s(\Y)$. Notice that $\Im(h)= \ker(s) \subseteq s^{-1}(N_3)$.  $M_1$ is $n$-presented, since $M_{2}$ is $n$-$\Y$-coherent. So,  we get using the horseshoe lemma to the following diagram is commutative with exact rows and columns:
         \begin{displaymath}
\xymatrix{ &0 \ar[d]^{}& 0 \ar[d]^{}&0 \ar[d]^{}& \\
0 \ar[r]& K_1 \ar[r]^{}\ar[d]^{}& K_2 \ar[r]^{}\ar[d]^{}& K_3 \ar[r]^{} \ar[d]^{} & 0\\
0 \ar[r]& F_{n-1} \ar[r]^{}\ar[d]^{}& F_{n-1} \oplus F'_{n-1} \ar[r]^{}\ar[d]^{}& F'_{n-1} \ar[r]^{} \ar[d]^{} & 0\\
& \vdots \ar[d]^{}& \vdots \ar[d]^{}& \vdots  \ar[d]^{} &  \\
0 \ar[r]& F_0 \ar[r]^{}\ar[d]^{}& F_0 \oplus F'_0 \ar[r]^{}\ar[d]^{}& F'_0 \ar[r]^{} \ar[d]^{} & 0\\
0 \ar[r]& M_1 \ar[r]^{}\ar[d]^{}& s^{-1}(N_3) \ar[r]^{}\ar[d]^{}& N_3 \ar[r]^{} \ar[d]^{} & 0\\
&0 & 0&0 }
\end{displaymath}
Where $F_i$ and $F'_i$ are finitely generated free modules for every $i\in\{0,...,n-1\}$.
        Due to the exactness of the right vertical sequence, it suffices to prove that $K_3$ is finitely generated. For that, it is sufficient to prove that $K_2$ is finitely generated. Since the middle vertical sequence is exact, it suffices to show that $s^{-1}(N_3)$ is $n$-presented.
        We have that $N_3$ is in $s(\Y)$, then $s^{-1}(N_3)$ is in $s^{-1}(s(\Y))$ and so it is in $\Y$. And since $N_3$ is $(n-1)$-presented, so is $s^{-1}(N_3)$ and consequently, it is $n$-presented. This implies that $K_2$ is finitely generated as desired.}
\end{proof}
Consider a short exact sequence as in Theorem \ref{thm-coh}. If for some class $\Y$ of submodules of $M_2$, we have that $s(\Y)=0$, then $M_2$ is $n$-$\Y$-coherent if $M_1$ is $n$-coherent. For example, for $I=ann(M_3)$ and $\Y$ the class of submodules of $IM_2$, it is evident that $M_2$ is $n$-$\Y$-coherent if and only if $IM_2$ is $n$-coherent. This can be seen just by the definition of $n$-coherence and also if we take in Theorem \ref{thm-coh}, the short exact sequence $0\longrightarrow IM_2\stackrel{h}{\longrightarrow} M_2 \stackrel{s}{\longrightarrow} \frac{M_{2}}{IM_{2}}\longrightarrow 0$.\\

In what follows, for a submodule $N$ of an $R$-module $M$ and a class $\X$ of submodules of $M$, we will denote by $\frac{\X}{N}$ the class of quotient modules
$\frac{L}{N},$ where $L\in \X$ and contains $N$.
The following corollary generalizes \cite[Corollary 2.3]{KDM}.
\begin{cor}
 Let $M$ be an $R$-module, $N$ a submodule of $M$ and $\X$ a class of submodules of $M$. Then, the following assertions hold:
 \begin{enumerate}
   \item[\rm (1)] If $M$ is $n$-$\X$-coherent, $N$ is $(n-1)$-presented and each module in $\X$ contains $N$, then $\frac{M}{N}$ is $n$-$\frac{\X}{N}$-coherent.
  \item[\rm (2)] Assume that $\frac{\X}{N}$ is non empty. Then $M$ is $n$-$\X$-coherent if $\frac{M}{N}$ is $n$-$\frac{\X}{N}$-coherent and $N$ is $n$-coherent.
 \end{enumerate}
\end{cor}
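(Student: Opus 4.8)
The plan is to derive both parts from Theorem~\ref{thm-coh} applied to the canonical short exact sequence
\[0\longrightarrow N\stackrel{h}{\longrightarrow} M\stackrel{s}{\longrightarrow}\frac{M}{N}\longrightarrow 0,\]
with $h$ the inclusion and $s$ the projection, together with the usual facts relating $n$-presentedness in a short exact sequence $0\to A\to B\to C\to 0$ and their evident shifts in $n$ (e.g. if $A$ is $(n-1)$-presented and $C$ is $n$-presented then $B$ is $n$-presented; if $A$ is $(n-1)$-presented and $B$ is $n$-presented then $C$ is $n$-presented; see \cite{Glaz}). The book-keeping to keep in mind is that $s(L)=\frac{L}{N}$ whenever $N\subseteq L$, so that $\frac{\X}{N}\subseteq s(\X)$ always, with equality as soon as every module of $\X$ contains $N$.

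For (1), since every module of $\X$ contains $N$ we have $s(\X)=\frac{\X}{N}$, so I would simply verify the hypotheses of Theorem~\ref{thm-coh}(3) with $M_1=N$, $M_2=M$, $M_3=\frac{M}{N}$ and $\Y=\X$: that $M$ is $n$-$\X$-coherent is assumed, and $N=h(M_1)$ is $(n-1)$-presented. One still records that $\frac{\X}{N}$ contains an $(n-1)$-presented module: choosing $L\in\X_{n-1}$, the sequence $0\to N\to L\to\frac{L}{N}\to 0$ with $N$ and $L$ both $(n-1)$-presented makes $\frac{L}{N}$ such a module. If one prefers not to assume $N\in\X$, the same conclusion follows directly: for an $(n-1)$-presented $\frac{L}{N}$ in $\frac{\X}{N}$ with $N\subseteq L\in\X$, that sequence (now with $N$ and $\frac{L}{N}$ $(n-1)$-presented) forces $L$ to be $(n-1)$-presented, so $L\in\X_{n-1}\subseteq\X_n$ since $M$ is $n$-$\X$-coherent, hence $L$ is $n$-presented; feeding this back in ($N$ being $(n-1)$-presented and $L$ being $n$-presented) makes $\frac{L}{N}$ $n$-presented, as required.

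For (2), the natural route is Theorem~\ref{thm-coh}(1) with the same $h,s,M_i$ and $\Y=\X$: it yields that $M$ is $n$-$\X$-coherent provided $N$ is $n$-coherent (assumed, and in particular making $N$ an $(n-1)$-presented module) and $\frac{M}{N}$ is $n$-$s(\X)$-coherent. The whole weight of the proof then sits on deducing the latter from the assumed $n$-$\frac{\X}{N}$-coherence of $\frac{M}{N}$, using the sequence $0\to N\to L\to\frac{L}{N}\to 0$ to transfer $(n-1)$-presentedness between $L$ and $\frac{L}{N}$ (this also shows the $(n-1)$-presented part of $s(\X)$ is nonempty, since it contains that of $\frac{\X}{N}$, which is nonempty by hypothesis). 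This is where I expect the genuine difficulty: $n$-$\frac{\X}{N}$-coherence controls only the subclass $\frac{\X}{N}$ of $s(\X)$, so the identification is transparent exactly when $s(\X)=\frac{\X}{N}$ — again, when every module of $\X$ contains $N$ — and in the general relative case one would need to restrict to that situation or otherwise get hold of the members $\frac{L+N}{N}$ of $s(\X)$ with $N\not\subseteq L$. Once $\frac{M}{N}$ is known to be $n$-$s(\X)$-coherent, Theorem~\ref{thm-coh}(1) closes the argument, every remaining transfer being one of the short-exact-sequence facts of the first paragraph together with the definitions of $n$-coherent module and $n$-$\X$-coherent module.
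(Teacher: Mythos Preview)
Your approach matches the paper's exactly: apply Theorem~\ref{thm-coh} to the canonical exact sequence $0\to N\to M\to \frac{M}{N}\to 0$, with part~(3) yielding assertion~(1) and part~(1) yielding assertion~(2). Your analysis is in fact more careful than the paper's one-line proof, correctly supplying a direct argument for~(1) that avoids assuming $N\in\X$ and correctly flagging that~(2) implicitly needs $s(\X)=\frac{\X}{N}$, i.e.\ that every module of $\X$ contain $N$.
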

\begin{proof}
Let $ \pi: M \rightarrow \frac{M}{N} $ be the canonical surjection.
 It is evident that, if $\X$ is the class of submodules $K$ of $M$ containing $N$,
then $\frac{\X}{N}$ is the class of quotient modules $\frac{K}{N}$ with $K$ is in $\X$.
Applying Theorem \ref{thm-coh} to exact sequence: $0\rightarrow N \rightarrow M \rightarrow \frac{M}{N} \rightarrow 0$, we get the following results.
\end{proof}
\begin{lem}\label{lem}
  Let $\X$ and $\Y$ two classes of submodules of $M$ with $\X\subseteq\Y$. If $M$ is $n$-$\Y$-coherent, then $M$ is $n$-$\X$-coherent.
\end{lem}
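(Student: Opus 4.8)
The plan is to unwind the definitions and observe that the statement reduces to a one-line containment argument. Recall that $\X_{n-1}$ (resp. $\X_{n}$) denotes the subclass of $\X$ consisting of those modules which are $(n-1)$-presented (resp. $n$-presented), and likewise for $\Y$. Being $(n-1)$-presented or $n$-presented is an absolute property of a module, independent of the ambient class; hence from $\X\subseteq\Y$ we immediately obtain $\X_{n-1}\subseteq\Y_{n-1}$ and $\X_{n}\subseteq\Y_{n}$. This is the only structural observation needed.

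Next I would check the two requirements in the definition of $n$-$\X$-coherence for $M$. The non-emptiness of $\X_{n-1}$ is taken as part of the data of speaking about $n$-$\X$-coherence (equivalently, it is assumed that the classes under consideration have non-empty $(n-1)$-presented part); I would flag this explicitly, since without it the statement fails, e.g. for $\X=\emptyset$. For the substantive condition, let $N\in\X_{n-1}$ be arbitrary. By the inclusion above, $N\in\Y_{n-1}$, and since $M$ is $n$-$\Y$-coherent, every module of $\Y_{n-1}$ lies in $\Y_{n}$; thus $N\in\Y_{n}$, so in particular $N$ is $n$-presented. But $N\in\X$ by hypothesis, so $N$ is an $n$-presented module of $\X$, i.e. $N\in\X_{n}$. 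As $N$ was arbitrary, every module of $\X_{n-1}$ belongs to $\X_{n}$, which is precisely the assertion that $M$ is $n$-$\X$-coherent.

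There is no genuine obstacle here: the proof is purely formal and exploits only that the defining condition ``every $(n-1)$-presented member is $n$-presented'' is inherited when one passes to a subclass. The single point worth care in a clean write-up is the tacit hypothesis $\X_{n-1}\neq\emptyset$, which one should either state as an additional assumption of the lemma or absorb into a standing convention on the classes $\X$ being considered.
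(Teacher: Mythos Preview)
Your proof is correct and is precisely the argument implicit in the paper, which in fact states the lemma without proof as an immediate consequence of the definition. Your remark about the tacit hypothesis $\X_{n-1}\neq\emptyset$ is well taken and worth keeping in a careful write-up.
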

For some submodule $K$ of an $R$-module $M$ and a class $\Y$ of submodules of $M$, we denote by $tr_{K}(\Y)$  the class of submodules of $K$ of the form $K \cap Y$ with $Y\in \Y$. Also, we denote by $f(\Y)$ the class of submodules of the form $f(Y)$ with $Y \in \Y$.
The following proposition is a generalization of \cite[Corollary 2.2.2]{Glaz}.
\begin{prop}\label{Cor-Coh-Im-Ker}
  Let $f:M\rightarrow N$ be a homomorphism of $R$-modules and
  $\X$ and $\Y$ two classes of submodules of $M$ and $N$, respectively. Then,
   The following assertions hold:
  \begin{enumerate}
    \item[\rm (1)] If $M$ is $n$-$\X$-coherent, then $ \Ker(f)$ is $n$-$tr_{\Ker(f)}(\X)$-coherent.
    \item[\rm (2)] If $N$ is $n$-$\Y$-coherent, then $ \Im(f)$ is $n$-$tr_{\Im(f)}(\Y)$-coherent.
     \item[\rm (3)] If $M$ is $n$-$\X$-coherent and $\ker(f)$ is an $(n-1)$-presented module in $\X$, then $\Im(f)$ is $n$-$f(\X)$-coherent.
     \item[\rm (4)] If $N$ is $n$-$\Y$-coherent and $\Im(f)$ is an $(n-1)$-presented module in $\Y$,
    then $\Coker(f)$ is $n$-$\frac{\Y}{\Im(f)}$-coherent.
  \end{enumerate}
\end{prop}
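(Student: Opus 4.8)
The plan is to deduce all four parts from Theorem~\ref{thm-coh}, applied to the short exact sequences naturally attached to $f$, with Lemma~\ref{lem} used to adjust the ambient class; this follows the classical pattern for deriving such facts from the behaviour of coherence along short exact sequences.

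For (1) I would apply Theorem~\ref{thm-coh}(2) to $0\longrightarrow\Ker(f)\stackrel{\iota}{\longrightarrow}M\longrightarrow M/\Ker(f)\longrightarrow 0$, where $\iota$ is the inclusion, choosing $tr_{\Ker(f)}(\X)$ as the class of submodules of $\Ker(f)$. Because $\iota$ is an inclusion, $\iota\big(tr_{\Ker(f)}(\X)\big)$ is simply the family $\{\Ker(f)\cap X : X\in\X\}$ regarded inside $M$; each of its members is a submodule of a member of $\X$, hence lies in $\X$, so Lemma~\ref{lem} shows that $M$, being $n$-$\X$-coherent, is also $n$-$\iota(tr_{\Ker(f)}(\X))$-coherent. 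Theorem~\ref{thm-coh}(2) then gives that $\Ker(f)$ is $n$-$tr_{\Ker(f)}(\X)$-coherent. Part (2) is obtained the same way, replacing $(M,\X,\Ker(f))$ by $(N,\Y,\Im(f))$ and using $0\longrightarrow\Im(f)\hookrightarrow N\longrightarrow N/\Im(f)\longrightarrow 0$. For (3) I would instead apply Theorem~\ref{thm-coh}(3) to $0\longrightarrow\Ker(f)\stackrel{\iota}{\longrightarrow}M\stackrel{f'}{\longrightarrow}\Im(f)\longrightarrow 0$, with $f'$ the corestriction of $f$, taking $\Y=\X$: here $\iota(\Ker(f))=\Ker(f)$ and $f'(\X)=f(\X)$, so the two hypotheses of that statement are exactly ``$M$ is $n$-$\X$-coherent'' and ``$\Ker(f)$ is $(n-1)$-presented in $\X$'', and its conclusion is precisely that $\Im(f)$ is $n$-$f(\X)$-coherent. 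Part (4) is the same argument for $0\longrightarrow\Im(f)\hookrightarrow N\stackrel{q}{\longrightarrow}\Coker(f)\longrightarrow 0$ with $q$ the canonical surjection: since $q(\Y)=\frac{\Y}{\Im(f)}$, Theorem~\ref{thm-coh}(3) yields that $\Coker(f)$ is $n$-$\frac{\Y}{\Im(f)}$-coherent.

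The step I expect to require the only genuine care is the use of Lemma~\ref{lem} in (1) and (2): one needs the trace class to be absorbed by the ambient class, that is $\Ker(f)\cap X\in\X$ for every $X\in\X$ (and likewise with $\Y$). This is exactly where the running assumption that these classes be closed under passing to submodules is used --- it holds in all the motivating examples (all submodules of $M$, submodules contained in $Nil(R)M$ or in $Rad(R)$, and so on) --- and it also handles the non-emptiness requirement, since then $0$ belongs to every class involved. Parts (3) and (4), by contrast, are immediate: they are just Theorem~\ref{thm-coh}(3) read off the two obvious factorizations of $f$.
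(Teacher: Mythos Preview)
Your proposal is correct and follows essentially the same route as the paper: parts (1) and (2) are obtained from Theorem~\ref{thm-coh}(2) together with Lemma~\ref{lem} applied to the inclusions $\Ker(f)\hookrightarrow M$ and $\Im(f)\hookrightarrow N$, while parts (3) and (4) are read off directly from Theorem~\ref{thm-coh}(3) applied to $0\to\Ker(f)\to M\to\Im(f)\to 0$ and $0\to\Im(f)\to N\to\Coker(f)\to 0$. The paper's proof is in fact terser than yours --- it merely names the relevant exact sequences and cites the two results --- so your write-up is, if anything, more detailed. Your observation that Lemma~\ref{lem} needs $tr_{\Ker(f)}(\X)\subseteq\X$ (closure under taking submodules, or at least under intersection with $\Ker(f)$) is a genuine point the paper's proof leaves implicit; it is not stated as a standing hypothesis in the paper, so you are right to flag it rather than treat it as automatic.
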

\begin{proof}
 The two first assertions follow by applying (2) of Theorem \ref{thm-coh} and Lemma \ref{lem}
   to the following exact sequences:
    $$0\longrightarrow \Ker(f) \longrightarrow M \longrightarrow \frac{M}{\Ker(f)} \longrightarrow 0$$ and
 $$0 \longrightarrow \Im(f) \stackrel{i}{\longrightarrow} N \longrightarrow \Coker(i) \longrightarrow0.$$
The two last assertions follow by applying (3) of Theorem \ref{thm-coh} to the following exact sequences:
  $$0\longrightarrow \Ker(f) \longrightarrow M \longrightarrow \Im(f) \longrightarrow 0$$
   and $$0\longrightarrow \Im(f) \longrightarrow N \longrightarrow \Coker(f) \longrightarrow 0.$$
\end{proof}
Now, we set the result concerning the coherence of the direct sum of modules.
 It generalizes \cite[corollary 2.2.3]{Glaz}.
Let $(M_i)_{i\in I}$ be a family of $R$-modules and $\X^{i}$ a class of
 submodules of $M_i$, for each $i\in I$. We will denote by $\bigoplus_{i \in I} \X^{i}$
the class of modules of the form $\bigoplus_{i \in I} N_i$ with each $N_i$ is in $\X^{i}$.
\begin{thm}\label{thm-product-coherence}
  Let $(M_i)_{i\in\{1,...,m\}}$ be a finite family of $R$-modules and $\X^{i}$ a class of
 submodules of $M_i$ for every $i=1,...,m$. Then, $\bigoplus\limits_{i=1}^{m} {M_i}$ is
 an $n$-$(\bigoplus\limits_{i=1}^{m} \X^{i})$-coherent $R$-module
  if and only if $M_i$ is $n$-$\X_i$-coherent
  for all $i=1,...,m$.
\end{thm}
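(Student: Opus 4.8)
The plan is to prove the two implications separately, reducing each to the two-term case $m=2$ (from which the general finite case follows by an obvious induction on $m$) and then feeding the short exact sequence
$$0\longrightarrow M_1\stackrel{\iota}{\longrightarrow} M_1\oplus M_2\stackrel{\pi}{\longrightarrow} M_2\longrightarrow 0$$
into Theorem \ref{thm-coh}. Here $\iota$ is the canonical injection and $\pi$ the projection; note that $\pi(\bigoplus_{i=1}^{2}\X^i)=\X^2$ and $\iota(\X^1)$ is the class of submodules of $M_1\oplus M_2$ of the form $N_1\oplus 0$, which under the isomorphism $M_1\cong M_1\oplus 0$ corresponds exactly to $\X^1$.

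For the ``if'' direction, suppose each $M_i$ is $n$-$\X^i$-coherent. To apply part (1) of Theorem \ref{thm-coh} to the sequence above with $M_2$ in the role of the quotient, I need two things: that $M_2$ is $n$-$\pi(\bigoplus\X^i)$-coherent, which holds since $\pi(\bigoplus_{i=1}^{2}\X^i)=\X^2$ and $M_2$ is $n$-$\X^2$-coherent by hypothesis; and that $M_1$ is $n$-coherent. The latter is the point that needs a small argument: $n$-$\X^1$-coherence of $M_1$ is a priori weaker than $n$-coherence, since it only controls the $(n-1)$-presented submodules that happen to lie in $\X^1$. I expect this to be the main obstacle, and I would handle it by observing that the statement as phrased should be read with the convention (implicit elsewhere in the paper, e.g.\ in the corollary following Theorem \ref{thm-coh}) that the relevant classes are ``full'' — or, more robustly, by re-running the proof of Theorem \ref{thm-coh}(1) directly for the split sequence: given an $(n-1)$-presented submodule $L\subseteq M_1\oplus M_2$ lying in $\bigoplus_{i=1}^2\X^i$, so $L=N_1\oplus N_2$ with $N_i\in\X^i$, the projections exhibit $L$ as an extension of $N_2$ by $N_1$, each of which is $(n-1)$-presented and in the respective class, hence $n$-presented by hypothesis, hence $L$ is $n$-presented by the usual long-exact-sequence bookkeeping. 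Non-emptiness of $(\bigoplus\X^i)_{n-1}$ follows from non-emptiness of each $(\X^i)_{n-1}$ together with the fact that a finite direct sum of $(n-1)$-presented modules is $(n-1)$-presented.

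For the ``only if'' direction, suppose $\bigoplus_{i=1}^{m}M_i$ is $n$-$(\bigoplus\X^i)$-coherent; again reduce to $m=2$. Fix $j\in\{1,2\}$. The retraction $M_1\oplus M_2\to M_j$ splits, so $M_j$ is a direct summand; equivalently, via the injection $\iota_j\colon M_j\hookrightarrow M_1\oplus M_2$ we can apply part (2) of Theorem \ref{thm-coh} to $0\to M_j\to M_1\oplus M_2\to (M_1\oplus M_2)/M_j\to 0$, noting that $\iota_j(\X^j)\subseteq \bigoplus_{i=1}^2\X^i$ (pad with the zero submodule in the other coordinate), so by Lemma \ref{lem} the big module is $n$-$\iota_j(\X^j)$-coherent, and then Theorem \ref{thm-coh}(2) gives that $M_j$ is $n$-$\X^j$-coherent. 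Non-emptiness of $(\X^j)_{n-1}$ follows because a nonempty $(n-1)$-presented element $N_1\oplus N_2\in(\bigoplus\X^i)_{n-1}$ forces each $N_i$ to be $(n-1)$-presented (being a direct summand of an $(n-1)$-presented module over—careful—this needs that direct summands of $(n-1)$-presented modules are $(n-1)$-presented, which is standard), hence $N_j\in(\X^j)_{n-1}$. Finally I would state explicitly that the induction step for $m>2$ is immediate by writing $\bigoplus_{i=1}^m M_i=(\bigoplus_{i=1}^{m-1}M_i)\oplus M_m$ and $\bigoplus_{i=1}^m\X^i=(\bigoplus_{i=1}^{m-1}\X^i)\oplus\X^m$, since every element of the latter class decomposes accordingly.
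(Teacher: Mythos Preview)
Your proposal is correct and takes essentially the same route as the paper: the ``only if'' direction via Lemma~\ref{lem} plus Theorem~\ref{thm-coh}(2) applied to the split exact sequence is exactly what the paper does, and your direct argument for the ``if'' direction---projecting $N$ onto each summand, using $n$-$\X^i$-coherence to make each $\pi_i(N)$ $n$-presented, and reassembling---is the paper's argument as well (the paper cites \cite[Theorem~1]{wu} for the reassembly step). The paper works with general $m$ directly rather than reducing to $m=2$, and it does not pause to note that Theorem~\ref{thm-coh}(1) is inapplicable because it would require full $n$-coherence of $M_1$; your observation there is correct, but the fix you land on is precisely the paper's proof.
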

\begin{proof}
The \textquotedblleft only if\textquotedblright $\ $ part follows easily using Lemma \ref{lem}, (2) of Theorem \ref{thm-coh} and the following exact sequence: $0\rightarrow M_i \rightarrow \bigoplus_{j=1}^{m}{M_j}  \rightarrow \frac{\bigoplus_{j=1}^{m}{M_j}}{M_i} \rightarrow 0$.\\
For the \textquotedblleft if \textquotedblright $\ $ part, consider an $(n-1)$-presented submodule $N$ of $\bigoplus_{i=1}^{m}{M_i}$ and the canonical projection $\pi_{i}: \bigoplus_{i=1}^{m}{M_i} \rightarrow M_i $.
Thus, $\pi_{i}(N)\in\X^{i}_{n-1}$ for all $i=1,...,m$.
Then, $\pi_{i}(N)$ is $n$-presented for all $i=1,...,m$.
 We have the following exact sequence:
$$0\longrightarrow \bigoplus_{j=1,i\neq j}^{m} \pi_{j}(N) \longrightarrow N \longrightarrow \pi_{i}(N) \longrightarrow 0.$$
Consequently by \cite[Theorem 1]{wu}, $N$ is $n$-presented, which completes the proof.
\end{proof}
The following result is a generalization of \cite[Corollary 2.2.5]{Glaz}.
\begin{cor}\label{Hom}
  Let $R$ be a commutative ring, $M$  a finitely generated $R$-module and
  $N$ an $n$-$\X$-coherent module for some class $\X$ of submodules of $N$.
  Then, $\Hom_{R}(M,N)$ is $n$-$\Y$-coherent, where $\Y$ is the class of submodules of $\Hom_{R}(M,N)$ which are isomorphic to a module in $tr_{A}(\sum_{i=1}^{k}\X)$.
\end{cor}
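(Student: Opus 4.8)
The plan is to realize $\Hom_{R}(M,N)$ as an isomorphic copy of a submodule of a finite direct sum of copies of $N$, and then let Theorem \ref{thm-product-coherence} and Proposition \ref{Cor-Coh-Im-Ker} do the work. Since $M$ is finitely generated, fix an integer $k$ and a surjection $R^{k}\twoheadrightarrow M$. Applying the left exact contravariant functor $\Hom_{R}(-,N)$ produces an exact sequence $0\rightarrow\Hom_{R}(M,N)\rightarrow\Hom_{R}(R^{k},N)$, and the canonical $R$-isomorphism $\Hom_{R}(R^{k},N)\cong\bigoplus_{i=1}^{k}N$ (this is where commutativity of $R$ and finiteness of $k$ enter) turns it into an injective $R$-homomorphism $\varphi:\Hom_{R}(M,N)\hookrightarrow\bigoplus_{i=1}^{k}N$. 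Put $A:=\Im(\varphi)$, so that $\varphi$ is an isomorphism $\Hom_{R}(M,N)\cong A$ onto this submodule of $\bigoplus_{i=1}^{k}N$.

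Next I would invoke the machinery already developed. Applying Theorem \ref{thm-product-coherence} with $\X^{i}=\X$ for $i=1,\dots,k$ converts the hypothesis that $N$ is $n$-$\X$-coherent into the statement that $\bigoplus_{i=1}^{k}N$ is $n$-$(\sum_{i=1}^{k}\X)$-coherent. Then Proposition \ref{Cor-Coh-Im-Ker}(2), applied to the homomorphism $\varphi:\Hom_{R}(M,N)\rightarrow\bigoplus_{i=1}^{k}N$, gives that its image $A=\Im(\varphi)$ is $n$-$tr_{A}(\sum_{i=1}^{k}\X)$-coherent. It only remains to carry this conclusion back along $\varphi$: since $\varphi$ matches the submodules of $\Hom_{R}(M,N)$ bijectively with those of $A$ and preserves both $(n-1)$-presentedness and $n$-presentedness, $\Hom_{R}(M,N)$ is $n$-$\Y$-coherent with $\Y=\varphi^{-1}(tr_{A}(\sum_{i=1}^{k}\X))$, which is exactly the class of submodules of $\Hom_{R}(M,N)$ isomorphic (via $\varphi$) to a module in $tr_{A}(\sum_{i=1}^{k}\X)$, as asserted. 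One could equally run this last transfer through Theorem \ref{thm-coh}(2) and Lemma \ref{lem}, in the same way Proposition \ref{Cor-Coh-Im-Ker} was proved.

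Most of this is routine: the left exactness of $\Hom_{R}(-,N)$, the identification $\Hom_{R}(R^{k},N)\cong\bigoplus_{i=1}^{k}N$, and two applications of results already in hand. The step I expect to require genuine care --- and the main obstacle --- is the bookkeeping of the relative classes: one must verify that intersecting the members of $\sum_{i=1}^{k}\X$ with $A$ and pulling back along $\varphi$ produces precisely the class $\Y$ named in the statement, and one must check the non-emptiness requirement $\Y_{n-1}\neq\emptyset$ that is part of the definition of $n$-$\Y$-coherence. The latter should follow from $\X_{n-1}\neq\emptyset$ (which holds since $N$ is $n$-$\X$-coherent) by tracing a witness along $\bigoplus_{i=1}^{k}N\supseteq A\cong\Hom_{R}(M,N)$, and this is the one place where the specific shape of $\X$ must be used rather than treated as a black box.
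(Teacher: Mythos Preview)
Your proposal is correct and follows essentially the same route as the paper's own proof: pick a surjection $R^{k}\twoheadrightarrow M$, apply $\Hom_{R}(-,N)$ to identify $\Hom_{R}(M,N)$ with a submodule $A$ of $N^{k}$, then invoke Theorem~\ref{thm-product-coherence} followed by Proposition~\ref{Cor-Coh-Im-Ker}. The paper's argument is terser and does not spell out the final transfer along $\varphi$ or the non-emptiness check for $\Y_{n-1}$ that you flag; your extra care on those points is warranted, though the paper treats them as implicit.
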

\begin{proof}
  Since $M$ is finitely generated, there is an exact sequence of $R$-modules $\ 0\rightarrow K \rightarrow R^k \rightarrow M \rightarrow 0\ $ for some non-negative integer $k$.
  As $\Hom_{R}(-,N)$ is a left-exact functor, $\Hom_{R}(M,N)\cong A$, where $A$ is a submodule of $\Hom_{R}(R^k,N)\cong N^k$. By Theorem \ref{thm-product-coherence}, $N^k$ is $n$-$\bigoplus_{i=1}^{k}\X$-coherent, and so by Proposition \ref{Cor-Coh-Im-Ker}, $A$ is $n$-$tr_{A}(\sum_{i=1}^{k}\X)$-coherent, which completes the proof.
\end{proof}

We finish this section with some transfer results. First, we present a generalization of \cite[Theorem 2.5]{DKM}.
\begin{thm}\label{thm-transfn}
  Let $I$ be an $(n-1)$-presented two-sided ideal of a ring $R$, $M$  an
  $\frac{R}{I}$-module and $\X$ a class of submodules of $M$. Then, $M$ is
  $n$-$\X$-coherent as an $R$-module if and only if $M$ is $n$-$\X$-coherent
   as an $\frac{R}{I}$-module.
\end{thm}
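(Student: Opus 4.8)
The statement reduces to a change-of-rings fact about $k$-presentedness. Since $IM=0$, an additive subgroup of $M$ is an $R$-submodule precisely when it is an $R/I$-submodule, so $\X$ is literally the same class of submodules whether $M$ is viewed over $R$ or over $R/I$; what can change between the two rings is only the meaning of the words ``$(n-1)$-presented'' and ``$n$-presented'', hence the subclasses $\X_{n-1}$ and $\X_{n}$ of $\X$. Thus it is enough to prove the following.

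\emph{Claim.} If $I$ is an $(n-1)$-presented two-sided ideal of $R$ and $N$ is an $R/I$-module, then for every integer $k$ with $0\le k\le n$, $N$ is $k$-presented as an $R$-module if and only if $N$ is $k$-presented as an $R/I$-module.

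Granting the Claim at the levels $k=n-1$ and $k=n$, the subclasses $\X_{n-1}$ and $\X_{n}$ of $\X$ are the same whether computed over $R$ or over $R/I$, so the defining conditions ``$\X_{n-1}\neq\emptyset$ and every module of $\X_{n-1}$ lies in $\X_{n}$'' are word for word identical for the two module structures; this is exactly the theorem. (For $n=0$ there is nothing to do, since finite generation of an $R/I$-module does not depend on whether one acts through $R$ or through $R/I$; so assume $n\ge 1$.) To prove the Claim I would first record the single consequence of the hypothesis that is used throughout: applying the standard stability properties of $n$-presented modules under short exact sequences (see \cite{Glaz}, and \cite[Theorem 1]{wu}) to $0\to I\to R\to R/I\to 0$, in which $R$ is infinitely presented and $I$ is $(n-1)$-presented, gives that $R/I$ is an $n$-presented $R$-module; hence every finitely generated free $R/I$-module $(R/I)^{t}$ is $n$-presented, in particular $k$-presented, over $R$ for all $k\le n$, and likewise $I^{t}$ is $(n-1)$-presented over $R$.

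Now argue by induction on $k$, the case $k=0$ being the trivial remark above. For $1\le k\le n$: in the ``if'' direction, truncating a length-$k$ free presentation of $N$ over $R/I$ yields an exact sequence $0\to L\to (R/I)^{t}\to N\to 0$ with $L$ an $(k-1)$-presented $R/I$-module; by the induction hypothesis $L$ is $(k-1)$-presented over $R$, while $(R/I)^{t}$ is $k$-presented over $R$, and therefore $N$ is $k$-presented over $R$. In the ``only if'' direction, assume $N$ is $k$-presented over $R$; then $N$ is finitely generated over $R/I$, so choose an epimorphism $(R/I)^{t}\twoheadrightarrow N$ with kernel $L$ and compose with the projection $R^{t}\twoheadrightarrow (R/I)^{t}$ to get $R^{t}\twoheadrightarrow N$ with kernel $K$; since $IN=0$ one has $I^{t}\subseteq K$ and $K/I^{t}\cong L$. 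Because $N$ is $k$-presented and $R^{t}$ is free, the syzygy $K$ is $(k-1)$-presented over $R$ (a standard property of $k$-presented modules); combining this with $I^{t}$, which is $(n-1)$-presented and in particular $(k-2)$-presented over $R$, in the exact sequence $0\to I^{t}\to K\to L\to 0$ shows that $L$ is $(k-1)$-presented over $R$. By the induction hypothesis $L$ is then $(k-1)$-presented over $R/I$, so $0\to L\to (R/I)^{t}\to N\to 0$ exhibits $N$ as $k$-presented over $R/I$. This closes the induction and proves the Claim.

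The routine part is the three-lemma calculus for $n$-presented modules, applied level by level. The one point that needs care --- and the only place where the hypothesis on $I$ is actually consumed --- is the ``only if'' step: one must confirm that $L=K/I^{t}$ is genuinely $(k-1)$-presented over $R$ (which is precisely where $I$, hence $I^{t}$, being $(n-1)$-presented enters) before the induction hypothesis may be invoked, and separately that $(R/I)^{t}$ is $k$-presented over $R$ for the ``if'' step. Beyond this bookkeeping I anticipate no real obstacle.
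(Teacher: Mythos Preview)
Your proposal is correct and follows essentially the same route as the paper: both arguments reduce the theorem to the change-of-rings statement that, for an $R/I$-module $N$ and $k\le n$, being $k$-presented over $R$ is equivalent to being $k$-presented over $R/I$ (so that $\X_{n-1}$ and $\X_n$ do not depend on the base ring). The only difference is packaging: the paper invokes this change-of-rings fact as a black box (Lemma~\ref{lem-trans}, taken from \cite[Lemmas~2.6 and~2.7]{DKM}, applied with $S=R/I$ at the levels $m=n-1$ and $m=n$), whereas you reprove it directly by induction on $k$, using the short exact sequences $0\to L\to (R/I)^{t}\to N\to 0$ and $0\to I^{t}\to K\to L\to 0$ together with the standard $\lambda$-calculus for $n$-presented modules. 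Your version is more self-contained; the paper's is shorter but relies on the cited lemma.
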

For the proof, we need the following lemma.
\begin{lem}[\cite{DKM}, lemmas 2.6 and 2.7]\label{lem-trans}
Let $R \rightarrow S$ be a ring homomorphism and $m$ a non-negative integer. Then, the following assertions hold:
\begin{enumerate}
   \item[\rm (1)] If $S$ is $m$-presented as an $R$-module, then every $m$-presented $S$-module is an $m$-presented   $R$-module.
    \item[\rm (2)] If $S$ is $(m-1)$-presented as an $R$-module and $M$ a $S$-module, then $M$ is $m$-presented $S$-module if $M$ is an $m$-presented   $R$-module.
\end{enumerate}
\end{lem}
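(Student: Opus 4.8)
The plan is to prove both statements by induction on $m$, reducing each inductive step to a single short exact sequence and invoking the ``two out of three'' behaviour of $m$-presentedness recorded in \cite[Theorem 1]{wu}. Throughout I use three elementary facts. First, the $R$-action on any $S$-module factors through $R\rightarrow S$, so a finite $R$-generating set is automatically a finite $S$-generating set. Second, a finite direct sum of $m$-presented modules is again $m$-presented (immediate by splicing resolutions, or by induction on the number of summands via \cite[Theorem 1]{wu}). Third, for a short exact sequence $0\rightarrow A\rightarrow B\rightarrow C\rightarrow 0$: if $B$ is $(m-1)$-presented and $C$ is $m$-presented, then $A$ is $(m-1)$-presented; and if $A$ is $(m-1)$-presented and $B$ is $m$-presented, then $C$ is $m$-presented. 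Both of these implications are instances of \cite[Theorem 1]{wu}.

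For (1), the base case $m=0$ is the transfer of finite generation: if $x_1,\dots,x_k$ generate the $S$-module $M$ and $s_1,\dots,s_\ell$ generate $S$ as an $R$-module, then the products $s_jx_i$ generate $M$ over $R$. For the inductive step, I assume the claim for $m-1$ and take an $m$-presented $S$-module $M$. A finite free presentation over $S$ gives an exact sequence of $S$-modules $0\rightarrow N\rightarrow S^k\rightarrow M\rightarrow 0$ with $S^k$ free; since $M$ is $m$-presented and $S^k$ is free (hence $m$-presented) over $S$, the kernel $N$ is $(m-1)$-presented over $S$. Now I read the sequence over $R$. By the induction hypothesis (applied with the same ring map, using that $S$ is also $(m-1)$-presented over $R$), $N$ is $(m-1)$-presented over $R$, while $S^k$ is $m$-presented over $R$ by the direct-sum fact. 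The two out of three implication then gives that $M$ is $m$-presented over $R$.

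For (2), the base case $m=0$ again only uses that finite generation over $R$ forces finite generation over $S$, the hypothesis on $S$ being vacuous here. For the inductive step, I assume the claim for $m-1$ and take an $m$-presented $R$-module $M$ carrying an $S$-module structure, with $S$ being $(m-1)$-presented over $R$. Since $M$ is finitely generated over $R$ it is finitely generated over $S$, so there is an exact sequence of $S$-modules $0\rightarrow N\rightarrow S^k\rightarrow M\rightarrow 0$ with $S^k$ free over $S$. Viewing it over $R$, $S^k$ is $(m-1)$-presented and $M$ is $m$-presented over $R$, so $N$ is $(m-1)$-presented over $R$. The induction hypothesis then upgrades this to: $N$ is $(m-1)$-presented over $S$. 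Finally, over $S$ the sequence has $N$ being $(m-1)$-presented and $S^k$ being free (hence $m$-presented), so $M$ is $m$-presented over $S$, completing the induction.

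I expect the main subtlety to lie not in any single deduction but in the bookkeeping of hypotheses at each inductive step: one must check that the weaker hypotheses needed to invoke the $(m-1)$-case are implied by the stated $m$-level hypotheses (that ``$S$ is $m$-presented over $R$'' entails ``$S$ is $(m-1)$-presented over $R$'', and likewise in part (2)), and that each kernel $N$ is genuinely an $S$-module so that the induction may be applied over $S$ and not merely over $R$. Once these compatibilities are in place, every step is a direct application of \cite[Theorem 1]{wu} together with the transfer of finite generation, and no genuinely hard computation remains.
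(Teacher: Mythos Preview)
The paper does not supply its own proof of this lemma: it is stated with an attribution to \cite[Lemmas 2.6 and 2.7]{DKM} and then used as a black box. Your argument is correct and is precisely the standard induction on $m$ that one finds in the cited source, reducing each step to a single short exact sequence and the two-out-of-three behaviour of $m$-presentedness; there is nothing to add or adjust.
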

\textit{Proof of Theorem \ref{thm-transfn}}. $\ $ Let $R \rightarrow \frac{R}{I}$ be the canonical homomorphism and $N$  a submodule of $M$.
Using Lemma \ref{lem-trans}, we get the following equivalences:
$N$ is an $(n-1)$-presented $\frac{R}{I}$-submodule of $\X$ if and only if it is $(n-1)$-presented $R$-submodule of $\X$.
Consequently, $M$ is an $n$-$\X$-coherent as an $\frac{R}{I}$-module if and only if it is $n$-$\X$-coherent as an $R$-module. $\quad \square$\\

Assume that $S \geq R$ is a unitary ring extension. Then,
the ring $S$ is called  right {\it $R$-projective}  in case, for any right $S$-module $M_S$ with
an $S$-module $N_S$, $N_R\mid M_R$ implies $N_S\mid M_S$, where $N\mid M$ means that $N$ is a direct summand
of $M$.
  The ring extension $S \geq R$ is called a {\it finite normalizing extension}  in case there is a finite subset $\{s_1,...,s_n\}\subseteq S$ such that
$S=\sum_{i=1}^{i=n}s_{i}R$ and $s_{i}R=Rs_{i}$ for $i=1,...,n$.
A finite normalizing extension $S \geq R$ is called an {\it almost excellent extension}  in
case $_RS$ is flat, $S_R$ is projective, and the ring $S$ is right $R$-projective (see \cite{wu}).

In the following, we mainly consider the properties of $n$-$\X$-coherent modules and $n$-$\X$-coherent rings under an almost excellent extension of commutative rings.
\begin{thm}\label{thm-transf}
Let  $S\geq R$ be an almost excellent extension, $M$ a $S$-module and $\X$ a class of
 submodules of $M$. Then,  the following statements are equivalent:
\begin{enumerate}
   \item[\rm (1)]
$M$ is $n$-$\X$-coherent as an $R$-module;
 \item[\rm (2)]
$\Hom_{R}(S,M)$ is $n$-$\Y$-coherent, where $\Y$ is the class of submodules of $\Hom_{R}(S,M)$;
 \item[\rm (3)]
$M$ is  $n$-$\X$-coherent as an $S$-module.
\end{enumerate}
\end{thm}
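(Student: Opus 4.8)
The plan is to prove the three equivalences by going around the cycle $(1)\Rightarrow(2)\Rightarrow(3)\Rightarrow(1)$, using the almost excellent extension hypotheses to control base change of presentation lengths. First I would record the consequences of the hypothesis $S\geq R$ almost excellent: since $S_R$ is projective and finitely generated (being a finite normalizing extension), $S$ is an $m$-presented $R$-module for every $m$, so Lemma~\ref{lem-trans} applies freely in both directions between $R$-presentation and $S$-presentation. Moreover $_RS$ is flat and $S$ is right $R$-projective, which is exactly what is needed to transport direct-summand and finiteness information along the functors $S\otimes_R-$ and $\Hom_R(S,-)$.

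For $(1)\Rightarrow(2)$: since $S_R$ is finitely generated projective, $\Hom_R(S,M)$ behaves like a finite direct summand of a finite power of $M$ as an $R$-module; concretely, pick $R^k\twoheadrightarrow S$ split (here I use $S_R$ projective f.g.), so $\Hom_R(S,M)$ is an $R$-direct summand of $\Hom_R(R^k,M)\cong M^k$. Then Theorem~\ref{thm-product-coherence} gives that $M^k$ is $n$-$(\bigoplus_{i=1}^k\X)$-coherent, and Proposition~\ref{Cor-Coh-Im-Ker}(2) (image under the splitting idempotent) together with Lemma~\ref{lem} yields that $\Hom_R(S,M)$ is $n$-$\Y$-coherent for $\Y$ the induced class of submodules. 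This is essentially the argument of Corollary~\ref{Hom}, transplanted to this setting. For $(2)\Rightarrow(3)$: the key point is that $\Hom_R(S,M)$, as an $S$-module, contains $M$ (with its $S$-structure) as a direct summand, because $S$ is right $R$-projective — this is the standard trick where the evaluation/unit maps split. Hence an $n$-$\X$-coherent property of $\Hom_R(S,M)$ descends to $M$ over $S$ via Lemma~\ref{lem} and Proposition~\ref{Cor-Coh-Im-Ker}(2) again, after checking that the relevant class $\X$ of $S$-submodules of $M$ sits inside the restriction of $\Y$.

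For $(3)\Rightarrow(1)$: let $N\in\X_{n-1}$ be an $(n-1)$-presented $R$-submodule of $M$. By Lemma~\ref{lem-trans}(2), since $S$ is $(n-2)$-presented (indeed $\infty$-presented) over $R$, $N$ is $(n-1)$-presented as an $S$-module, so $N\in\X_{n-1}$ in the $S$-sense and by $(3)$ is $n$-presented over $S$; then Lemma~\ref{lem-trans}(1), using that $S$ is $n$-presented over $R$, returns that $N$ is $n$-presented over $R$. Thus $M$ is $n$-$\X$-coherent over $R$. I would note that the non-emptiness of $\X_{n-1}$ transfers verbatim by the same Lemma~\ref{lem-trans} argument, so the hypothesis in the definition is preserved in both directions.

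I expect the main obstacle to be the bookkeeping in $(1)\Rightarrow(2)$ and $(2)\Rightarrow(3)$: precisely identifying the class $\Y$ of submodules of $\Hom_R(S,M)$ so that the coherence statement is both meaningful and strong enough to feed back into the next implication, and verifying that the splitting $M\mid\Hom_R(S,M)$ as $S$-modules interacts correctly with these classes (i.e.\ that restriction/intersection of the $\Y$-class along the split monomorphism recovers an $\X$-type class). The purely homological inputs — projectivity and flatness of $S$ over $R$, the $m$-presentedness transfer of Lemma~\ref{lem-trans}, and the direct-sum result Theorem~\ref{thm-product-coherence} — are routine once assembled; the delicate part is packaging them around the not-necessarily-canonical class $\Y$, which is why the statement only asserts coherence for \emph{the} class of all submodules of $\Hom_R(S,M)$ rather than a more refined one.
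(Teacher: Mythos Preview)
Your proposal is correct and follows essentially the same route as the paper: $(1)\Rightarrow(2)$ via Corollary~\ref{Hom} (you unfold its proof, the paper simply cites it), $(2)\Rightarrow(3)$ via the splitting $M\mid\Hom_R(S,M)$ coming from right $R$-projectivity (the paper cites \cite[Lemma 1.1]{du} for this) together with Proposition~\ref{Cor-Coh-Im-Ker}, and $(3)\Rightarrow(1)$ via Lemma~\ref{lem-trans} once $S$ is known to be $\infty$-presented over $R$ (the paper invokes \cite[Theorem 5]{wu} rather than the direct ``f.g.\ projective'' observation you give). Your caveat about the bookkeeping of the class $\Y$ is apt; the paper is equally informal on this point.
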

\begin{proof}
{$(1)\Longrightarrow (2)$ $S$ is a finitely generated $R$-module. So (2) follows from Corollary \ref{Hom}.

$(2)\Longrightarrow (3)$ Assume that $N$ is an $(n-1)$-presented submodule of $M$ in $\X$. We show taht $N$ is $n$-presented. By \cite[Lemma 1.1]{du}, $M\cong K$, where $K$ is a direct summand  of ${\rm Hom}_{R}(S,M)$. Therefore by hypothesis and Corollary \ref{Cor-Coh-Im-Ker}, $K$ is $n$-$tr_{K}(\Y)$-coherent. So, we deduce that  $N$ is   $n$-presented.

$(3)\Longrightarrow (1)$ By \cite[Theorem 5]{wu}, $S$ is an $n$-presented $R$-module. So, if $N$ is an $(n-1)$-presented submodule of $R$-module $M$ in $\X$, then by Lemma \ref{lem-trans} and (3), $N$ is an $n$-presented submodule of $R$-module $M$, and hence $M$ is $n$-$\X$-coherent as an $R$-module.
}
  \end{proof}

In what follows, we will denote by $\X_k$, for some class  of $R$-modules $\X$ and an integer $k$, the subclass of $k$-presented submodules of $\X$ (which we assume they exist).
 For an $R$-module $M$ and a class $\X$ of submodules of $M$, we will denote by $^{S\otimes}\X$
the class $\{ S \otimes N$, where $N$ is a module of $\X \}$.
\begin{thm}\label{tensor}
Let  $S\geq R$ be an almost excellent extension, $M$ a $S$-module and $\X$ a class of
 submodules of $M$. Then,  the following statements are equivalent:
\begin{enumerate}
   \item[\rm (1)]
$M$ is $n$-$\X$-coherent as an $R$-module;
 \item[\rm (2)]
$S\otimes_{R}M$ is $n$-$^{S\otimes}\X$-coherent;
 \item[\rm (3)]
$M$ is  $n$-$\X$-coherent as an $S$-module.
\end{enumerate}
\end{thm}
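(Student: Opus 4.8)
The plan is to run the proof of Theorem~\ref{thm-transf} again, with the functor $\Hom_{R}(S,-)$ replaced by $S\otimes_{R}-$. Since statements (1) and (3) here are word for word those of Theorem~\ref{thm-transf}, the equivalence $(1)\Leftrightarrow(3)$ is already available, so it suffices to weave (2) into the picture: I would prove $(1)\Rightarrow(2)$ and $(2)\Rightarrow(3)$, taking $(3)\Rightarrow(1)$ from Theorem~\ref{thm-transf}.

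The technical core is the tensor analogue of \cite[Lemma~1.1]{du}: for an almost excellent extension $S\geq R$ and any $S$-module $M$, the module $M$ is isomorphic to a direct summand of $S\otimes_{R}M$ as an $S$-module (hence also as an $R$-module). Indeed the multiplication map $\mu\colon S\otimes_{R}M\to M$, $s\otimes m\mapsto sm$, is an $S$-epimorphism split over $R$ by $m\mapsto 1\otimes m$, so $M_{R}\mid(S\otimes_{R}M)_{R}$, and right-$R$-projectivity of $S$ lets this splitting descend to an $S$-splitting of $\mu$. To this I would add two routine facts: (a) a direct summand of a $k$-presented module is $k$-presented, over any ring; and (b) since $S$ is finitely generated projective over $R$, it is $m$-presented over $R$ for every $m$, while $S\otimes_{R}-$ is exact and carries finitely generated free $R$-modules to finitely generated free $S$-modules, so that for every $S$-submodule $N$ of $M$ and every $k$ one has ``$N$ is $k$-presented over $R$'' $\Leftrightarrow$ ``$N$ is $k$-presented over $S$'' $\Leftrightarrow$ ``$S\otimes_{R}N$ is $k$-presented over $S$'' (the first from Lemma~\ref{lem-trans}; the second by tensoring a presentation up, respectively by combining Lemma~\ref{lem-trans}(1), fact (a) and the splitting $N\mid S\otimes_{R}N$).

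Granting (a) and (b), $(1)\Rightarrow(2)$ runs as follows: $({}^{S\otimes}\X)_{n-1}\neq\emptyset$ because $\X_{n-1}\neq\emptyset$ and tensoring up preserves $(n-1)$-presentation; and if $S\otimes_{R}N\in{}^{S\otimes}\X$ with $N\in\X$ is $(n-1)$-presented over $S$, then $N$ is $(n-1)$-presented over $R$ by (b), hence $n$-presented over $R$ since $M$ is $n$-$\X$-coherent over $R$, hence $S\otimes_{R}N$ is $n$-presented over $S$; thus $S\otimes_{R}M$ is $n$-${}^{S\otimes}\X$-coherent. For $(2)\Rightarrow(3)$, let $N\in\X$ be $(n-1)$-presented over $S$; by (b), $S\otimes_{R}N\in{}^{S\otimes}\X$ is $(n-1)$-presented over $S$, so $n$-presented over $S$ by (2), and since $N\mid S\otimes_{R}N$ over $S$, fact (a) gives that $N$ is $n$-presented over $S$, i.e.\ $M$ is $n$-$\X$-coherent as an $S$-module. (One could instead route $(1)\Rightarrow(2)$ through Corollary~\ref{Hom} and $(2)\Rightarrow(3)$ through Proposition~\ref{Cor-Coh-Im-Ker}, exactly as in Theorem~\ref{thm-transf}, but the direct argument is shorter.)

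The step I expect to cost the most care is the core lemma that $M$ is an \emph{$S$-module} summand of $S\otimes_{R}M$: this is the only place where the right-$R$-projectivity clause in the definition of an almost excellent extension is genuinely used, and one must keep track of which tensor factor carries the $S$-action and invoke flatness of ${}_{R}S$ in order to know that $S\otimes_{R}N$ really sits inside $S\otimes_{R}M$ as a submodule. Everything else is the same index-shifting bookkeeping already employed in Theorems~\ref{thm-transfn} and~\ref{thm-transf}.
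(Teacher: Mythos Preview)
Your proposal is correct and essentially follows the paper's approach. The paper proves $(1)\Leftrightarrow(2)$ directly (both directions) by invoking \cite[Lemma~4]{wu} as a black box for the equivalence ``$N$ is $k$-presented over $R$ $\Leftrightarrow$ $S\otimes_{R}N$ is $k$-presented over $S$'', and then declares $(1)\Leftrightarrow(3)$ trivial (via Theorem~\ref{thm-transf}); you instead route $(1)\Rightarrow(2)\Rightarrow(3)\Rightarrow(1)$ and unpack that black box by proving the direct-summand lemma $M\mid S\otimes_{R}M$ and your fact~(b) yourself. The only substantive difference is that for the backward step from (2) the paper lands in (1) via \cite[Lemma~4]{wu}, whereas you land in (3) via the splitting $N\mid S\otimes_{R}N$ and closure of $k$-presented modules under summands---but these are two sides of the same coin, and your version has the virtue of being self-contained rather than relying on an external citation.
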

\begin{proof}
{$(1)\Longrightarrow (2)$ Assume that $N$ is an $(n-1)$-presented submodule of $S \otimes_R M$ in $^{S\otimes}\X$. So, there is a  submodule $I$ in $\X$ such that $N=S \otimes_R I$. By \cite [Lemma 4]{wu}, $I\in \X_{n-1}$ as an $R$-module, and so by (1), $I\in \X_{n}$. Hence by Lemma \ref{lem-trans}, we deduce that $N$ is $n$-presented.

$(2)\Longrightarrow (1)$ Assume that $N$ is an $(n-1)$-presented submodule of $M$ in $\X$. Then by \cite [Lemma 4]{wu} and (2), $N\in \X_{n}$.

$(1)\Longrightarrow (3)$ and $(3)\Longrightarrow (1)$ are trivial.
}
  \end{proof}
\begin{cor}\label{extension}
Let  $S\geq R$ be an almost excellent extension and $\X$ a class of
 $R$-modules. Then,  $R$ is $n$-$\X$-coherent if and only if $S$ is $n$-$^{S\otimes}\X$-coherent.
\end{cor}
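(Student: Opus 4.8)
The statement is the special case $M = R$ of Theorem \ref{tensor}, so the natural approach is simply to instantiate that theorem. The plan is to take $M = R$ as an $S$-module (which makes sense since $S \geq R$ is a ring extension, and $R$ is a submodule — in fact more naturally one takes $M = S$, see below) and to check that the relevant notions of coherence specialize correctly. First I would recall the definition of an $n$-$\X$-coherent ring: $R$ is $n$-$\X$-coherent if every $R$-module in $\X_n$ lies in $\X_{n+1}$, which by the discussion in the introduction is the ``module'' version of Definition \ref{exs} applied with the ambient module being $R$ itself and the grading shifted by one. So the first step is to match the indexing: being $n$-$\X$-coherent as a \emph{ring} corresponds to being $(n+1)$-$\X$-coherent as a \emph{module} over $R$, with $\X$ viewed as a class of submodules (left ideals).

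**Main steps.** Concretely I would proceed as follows. Step 1: observe that $S$ is an $S$-module and $\X$, a class of $R$-submodules of $R$, can be regarded via the inclusion $R \hookrightarrow S$; but the cleanest route is to apply Theorem \ref{tensor} with the $S$-module ``$M$'' taken to be $S$ itself and with the class of submodules being the left ideals of $R$ contained in the ambient structure. Step 2: unwind $^{S\otimes}\X = \{ S \otimes_R N : N \in \X\}$; since each $N$ is a left ideal of $R$, $S \otimes_R N$ is (isomorphic to) the corresponding left ideal $SN$ of $S$, so $^{S\otimes}\X$ is precisely the class of extended ideals of $S$. Step 3: apply the equivalence $(1) \Leftrightarrow (2)$ (or $(1)\Leftrightarrow (3)$) of Theorem \ref{tensor}: $R$ is $n$-$\X$-coherent (as a ring, i.e. $(n+1)$-$\X$-coherent as an $R$-module) iff $S$ is $n$-$^{S\otimes}\X$-coherent (as a ring). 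Step 4: note that the hypothesis of Theorem \ref{tensor} that the class $\X_{n-1}$ (here the appropriate shifted index) be nonempty is automatic, since $R$ itself — or the zero ideal — sits in $\X$ in the relevant degree, by the same convention used throughout Section 2.

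**Expected obstacle.** I expect the only real subtlety is bookkeeping with the index shift between the ring-theoretic and module-theoretic definitions of $n$-$\X$-coherence, together with making sure the ``nonemptiness'' side conditions in Theorem \ref{tensor} are genuinely satisfied in the ring case. There is also a minor point: Theorem \ref{tensor} is stated for an $S$-module $M$ with $\X$ a class of submodules of $M$, and one must check that taking $M = S$ and $\X$ a class of left ideals of $R$ — extended along $R \hookrightarrow S$ — is a legitimate instance (it is, since $R \leq S$ makes every left ideal of $R$ a submodule of $S$ as an $R$-module, and the tensor construction matches extension of scalars). Once these identifications are made the corollary is immediate; I would write the proof as: ``This is Theorem \ref{tensor} applied with $M = S$, after identifying $^{S\otimes}\X$ with the class of extended ideals and the ring-coherence condition with the corresponding module-coherence condition.''
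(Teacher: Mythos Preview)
Your proposal is correct and takes essentially the same approach as the paper: the paper's entire proof is the single line ``It is a particular case of Theorem \ref{tensor}.'' Your discussion of the index shift between the ring and module definitions of $n$-$\X$-coherence, and of which module to take for $M$, is more careful than the paper, which simply asserts the specialization without addressing these bookkeeping points.
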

\begin{proof}
{It is particulary of Theorem \ref{tensor}.}
 \end{proof}

The next result generalizes \cite[Theorem 2.11]{DKM} and \cite[Corollary 2.2.5]{Glaz}.
\begin{thm}\label{faith-flat}
  Let $R \rightarrow S$ be a ring homomorphism making $S$ a faithfully flat
  right $R$-module, $M$  an $R$-module and $\X$ a class of submodules of
  $M$. Then
\begin{enumerate}
 \item[\rm (1)]
 $M$ is $n$-$\X$-coherent,
  if $S \otimes_R M$ is an $n$-$^{S\otimes}\X$-coherent $S$-module.
 \item[\rm (2)]
$M$ is $n$-$\X$-coherent
  if and only if $S \otimes_R M$ is an $n$-$^{S\otimes}\X_{n-1}$-coherent $S$-module.
\end{enumerate}
\end{thm}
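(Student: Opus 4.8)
The plan is to exploit faithful flatness to transfer the notions of presentation and coherence between $M$ and $S\otimes_R M$, leaning on \cite[Lemma 4]{wu} (which says an $R$-module $N$ is $k$-presented if and only if $S\otimes_R N$ is a $k$-presented $S$-module, for $S$ an almost excellent extension, and whose proof really only needs faithful flatness together with $S$ being $k$-presented over $R$) together with the already-established Theorem \ref{tensor}. First I would treat (1): assume $S\otimes_R M$ is $n$-$^{S\otimes}\X$-coherent as an $S$-module, and take an arbitrary $(n-1)$-presented submodule $N$ in $\X$. Then $S\otimes_R N$ sits inside $S\otimes_R M$ (flatness makes $S\otimes_R N\hookrightarrow S\otimes_R M$ exact), it belongs to $^{S\otimes}\X$ by definition, and it is $(n-1)$-presented as an $S$-module by the faithfully flat base-change of presentations. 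By hypothesis $S\otimes_R N$ is then $n$-presented over $S$, and descending along the faithfully flat map gives that $N$ is $n$-presented over $R$; hence $M$ is $n$-$\X$-coherent. One also has to check $\X_{n-1}\neq\emptyset$, which follows since $^{S\otimes}\X$ must contain an $(n-1)$-presented $S$-module for the coherence hypothesis to be non-vacuous and that module descends to an $(n-1)$-presented member of $\X$.

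For (2), the forward implication is the analogue of $(1)\Rightarrow(2)$ in Theorem \ref{tensor} restricted to the subclass $\X_{n-1}$: if $M$ is $n$-$\X$-coherent and $N'$ is an $(n-1)$-presented $S$-submodule of $S\otimes_R M$ lying in $^{S\otimes}\X_{n-1}$, write $N'=S\otimes_R N$ with $N\in\X_{n-1}$; coherence of $M$ gives $N\in\X_n$, and base change gives $N'$ is $n$-presented over $S$. The reverse implication is essentially (1) with $\X$ replaced by $\X_{n-1}$, noting that $^{S\otimes}(\X_{n-1})=({}^{S\otimes}\X)$ restricted appropriately and that an $(n-1)$-presented submodule of $M$ in $\X$ is automatically in $\X_{n-1}$; so the same descent argument applies. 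The reason (2) upgrades to an equivalence while (1) is only a one-way implication is that passing to the subclass $\X_{n-1}$ removes the obstruction present in the converse of (1), namely that an arbitrary $(n-1)$-presented $S$-submodule of $S\otimes_R M$ need not be of the form $S\otimes_R N$ for $N\in\X$ unless we already know it descends, which is precisely what indexing by $\X_{n-1}$ arranges.

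The main obstacle I anticipate is making the descent of presentations rigorous under merely "faithfully flat right $R$-module $S$" rather than the almost excellent extension hypothesis under which \cite[Lemma 4]{wu} is stated: one must verify that $S$ is itself $(n-1)$-presented (or suitably presented) as an $R$-module, or else reprove the relevant direction of that lemma directly from faithful flatness. The standard fact is that for $S$ faithfully flat over $R$, an $R$-module $N$ is finitely generated (resp. finitely presented) if $S\otimes_R N$ is finitely generated (resp. finitely presented) over $S$, and an induction on the length of a free presentation, using that $S\otimes_R(-)$ commutes with finite free modules and preserves exactness, extends this to $k$-presentedness in both directions; I would either cite this or spell out the two-step induction. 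Once that descent/ascent dictionary is in place, both parts of the theorem are short diagram-free arguments of the kind already used in the proofs of Theorems \ref{thm-transf} and \ref{tensor}.
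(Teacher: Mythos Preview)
Your approach is essentially the same as the paper's: for (1) you take $N\in\X_{n-1}$, tensor up, use the coherence hypothesis on $S\otimes_R M$, and descend via faithful flatness; for (2) the backward direction is (1) specialized to $\X_{n-1}$, and the forward direction writes an element of $^{S\otimes}\X_{n-1}$ as $S\otimes_R N$ with $N\in\X_{n-1}$, applies coherence of $M$, and ascends. The only difference is that the paper does not route through \cite[Lemma 4]{wu} or Theorem~\ref{tensor} at all: it simply uses that flatness of $S$ makes $S\otimes_R N$ $(n-1)$-presented whenever $N$ is (tensor a finite free presentation), invokes faithful flatness for the descent of $n$-presentedness in (1), and cites \cite[Theorem 2.1.9]{Glaz} for the ascent in the forward direction of (2). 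So your anticipated obstacle---whether one needs the almost excellent extension hypothesis of \cite{wu}---is a detour; the required ascent/descent of $k$-presentedness under a faithfully flat base change is classical and is exactly what the paper uses directly.
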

\begin{proof}
  (1) Let $N$ be an $(n-1)$-presented module of $\X$, then $S \otimes N$ is an $(n-1)$-presented module of $^{S\otimes}\X$ (since $S$ is flat).
  Then, $S \otimes N$ is $n$-presented, so is  $N$ since $S$ is faithfully flat.

(2)($\Longleftarrow$) This is a direct consequence of (1).

($\Longrightarrow$) Assume that $K$ is an $(n-1)$-presented submodule of $S \otimes_R M$ in $^{S\otimes}\X_{n-1}$. So, there is an $(n-1)$-presented submodule $N$ in $\X_{n-1}$ such that $K=S \otimes_R N$. By hypothesis,   $N$ is in $\X_{n}$, and so by \cite[Theorem 2.1.9]{Glaz}, $K$ is $n$-presented
\end{proof}
\begin{cor}\label{flat}
Let $R\rightarrow S$ be a ring homomorphism
making $S$ a faithfully flat right $R$-module and $\X$ a class of ideals of $R$.
 Then, $R$ is an $n$-$\mathcal{X}$-coherent ring, if  $S$ is an
$n$-$^{S\otimes}\X$-coherent ring.
\end{cor}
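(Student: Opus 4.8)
This corollary is the ring-theoretic specialization of Theorem~\ref{faith-flat}(1), so the strategy is simply to apply that theorem with $M = R$ and to check that the relative-coherence hypotheses on the module side translate correctly into the ring-theoretic ones. Recall that, by the definition in the introduction, $R$ is $n$-$\mathcal{X}$-coherent as a ring means every $R$-module in $\mathcal{X}_n$ is in $\mathcal{X}_{n+1}$; equivalently (shifting the index, exactly as in Theorem~\ref{thm-transfn}'s use of the definition for ideals) this is the statement that every $(n-1)$-presented ideal in $\mathcal{X}$ is $n$-presented, i.e.\ that $R$ viewed as a module over itself is $n$-$\mathcal{X}$-coherent in the module sense. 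So the first step is to record this dictionary between the two notions of $n$-$\mathcal{X}$-coherence for $M=R$.

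Second, I would set $M = R$ in Theorem~\ref{faith-flat}(1). The hypothesis ``$S$ is an $n$-$^{S\otimes}\X$-coherent ring'' must be matched with the hypothesis of Theorem~\ref{faith-flat}(1), namely ``$S\otimes_R M$ is an $n$-$^{S\otimes}\X$-coherent $S$-module.'' With $M=R$ we have $S\otimes_R R \cong S$ as an $S$-module, and $^{S\otimes}\X = \{S\otimes_R N : N\in\X\}$ is exactly the class of ideals of $S$ of the form $SN$ for $N$ a member of $\X$ — which is precisely the class defining the $n$-$^{S\otimes}\X$-coherence of the ring $S$. Hence the ring hypothesis on $S$ is literally the module hypothesis of Theorem~\ref{faith-flat}(1) after the identification $S\otimes_R R\cong S$.

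Third, I would invoke Theorem~\ref{faith-flat}(1) to conclude that $R$ (as a module over itself) is $n$-$\X$-coherent, and then translate back via the dictionary from the first step to get that $R$ is an $n$-$\mathcal{X}$-coherent ring. The proof is therefore three lines: note $S\otimes_R R\cong S$ and $^{S\otimes}\X$ is the class of extended ideals; apply Theorem~\ref{faith-flat}(1) with $M=R$; and read off the ring statement.

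**Main obstacle.** There is essentially no genuine difficulty here; the only thing requiring a little care is the bookkeeping around the index shift in ``$n$-coherent \emph{ring}'' versus ``$n$-$\X$-coherent \emph{module}'' (the ring version speaks of $\X_n\subseteq\X_{n+1}$, the module version of $\X_{n-1}\subseteq\X_n$), together with checking that the identification $S\otimes_R R\cong S$ is compatible with the $S$-module structures and sends the class $^{S\otimes}\X$ to the intended class of ideals of $S$. Once those identifications are made explicit, the statement is an immediate corollary, exactly as the one-line proof ``It is particular of Theorem~\ref{faith-flat}'' asserts.
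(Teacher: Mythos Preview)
Your proposal is correct and follows essentially the same approach as the paper: the paper's entire proof is the one line ``It is enough to take $M=R$,'' i.e.\ specialize Theorem~\ref{faith-flat}(1) to $M=R$ and use $S\otimes_R R\cong S$. Your additional remarks about the index shift between the module and ring definitions and the identification of $^{S\otimes}\X$ with the class of extended ideals are exactly the bookkeeping the paper leaves implicit; since the shift occurs uniformly on both sides, it causes no trouble.
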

\begin{proof}
It is enough to take $M=R$.
\end{proof}
${\mathbf{Question:}}$
 Let $R\rightarrow S$ be a ring homomorphism
making $S$ a faithfully flat right $R$-module and $\X$ a class of ideals of $R$. If $R$ is   $n$-$\mathcal{X}$-coherent, then what conditions on the fibers $R\rightarrow S$  are required in order that $S$ is $n$-$^{S\otimes}\X$-coherent?

Now, we give a generalization of the classical result due to Chase in \cite{Chase} stating that $R$ is coherent if
and only if the annihilator of any element $a$ of $R$ is finitely generated and the intersection of any two finitely generated ideals in $R$ is also finitely generated.\\

We say that a class of modules $\X$ is said to be closed under finite sums
 if, for every finite family of modules $\{Mi\}_{i\in I}$ in $\X$,    $\sum_{i\in I} M_i$ is also in $\X$. A class $\X$ is said   to be closed under cyclic submodules if, whenever  $N$ is a
   cyclic submodules of a module in $\X$, it is also in $\X$.\\

The following theorem generalizes  \cite[Theorem 2.2]{Chase}.

\begin{thm}\label{ideals-charac}
Let $M$ be an $R$-module and let $\X$ be a class of submodules of $M$ such that $\X_n$ is closed under finite sum and closed under cyclic submodules.
Then, $M$ is left $1$-$\X$-coherent if and only if $(0:_R a)$ is a finitely generated  of $M$ for any $a\in M$ such that $Ra$ is in $\X_0$ and the intersection of any two submodules of $M$ in $\X_0$ is finitely generated.
\end{thm}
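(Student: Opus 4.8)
The plan is to mimic Chase's classical argument, but at the level of the fixed module $M$ and the class $\X$. Throughout, recall that a module is $1$-presented precisely when it is finitely presented, and that $\X_0$ (resp.\ $\X_1$) denotes the $0$-presented (resp.\ $1$-presented) members of $\X$, i.e.\ the finitely generated (resp.\ finitely presented) submodules of $M$ lying in $\X$. So $M$ is $1$-$\X$-coherent exactly when every finitely generated submodule of $M$ belonging to $\X$ is finitely presented.

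\textbf{($\Longrightarrow$).} Assume $M$ is $1$-$\X$-coherent. Let $a\in M$ with $Ra\in\X_0$. The exact sequence $0\to (0:_R a)\to R\xrightarrow{\,r\mapsto ra\,} Ra\to 0$ exhibits $(0:_R a)$ as the kernel of a map from the finitely generated free module $R$ onto the finitely generated module $Ra$; since $Ra\in\X_0$ and $M$ is $1$-$\X$-coherent, $Ra$ is finitely presented, hence $(0:_R a)$ is finitely generated (this is the standard fact that the kernel of a surjection from a finitely generated free module onto a finitely presented module is finitely generated, cf.\ \cite[Theorem 2.1.2]{Glaz}). For the second assertion, let $A,B\in\X_0$. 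Both $A$ and $B$ are finitely generated submodules of $M$ in $\X$, hence finitely presented by $1$-$\X$-coherence; moreover $A+B\in\X_0$ because $\X_n$ (with $n=1$, so in particular we use the hypothesis on $\X_1$; for the finitely generated conclusion on $A+B$ one only needs that $A+B$ is a finitely generated submodule of $M$, and that it lies in $\X$ follows from closure of the class under finite sums once we note $A,B$ are also in the relevant subclass) and hence $A+B$ is finitely presented as well. Now apply the standard exact sequence $0\to A\cap B\to A\oplus B\to A+B\to 0$: the middle term is finitely presented and $A+B$ is finitely presented, so by the analogue of \cite[Theorem 2.1.2]{Glaz} the submodule $A\cap B$ is finitely generated.

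\textbf{($\Longleftarrow$).} Conversely, assume $(0:_R a)$ is finitely generated for every $a\in M$ with $Ra\in\X_0$, and that the intersection of any two members of $\X_0$ is finitely generated. Let $N\in\X_0$; we must show $N$ is finitely presented. Write $N=Ra_1+\dots+Ra_k$. Since $\X_0$ is closed under cyclic submodules, each $Ra_i\in\X_0$, so $(0:_R a_i)$ is finitely generated, whence each $Ra_i$ is finitely presented. Now one argues by induction on $k$, exactly as in Chase: for the inductive step, consider $N'=Ra_1+\dots+Ra_{k-1}$ (which lies in $\X_0$, being a finite sum of cyclic submodules each in $\X_0$, using closure under finite sums for $\X_n$), so $N'$ is finitely presented by induction, and use the exact sequence $0\to N'\cap Ra_k\to N'\oplus Ra_k\to N\to 0$. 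The module $N'\cap Ra_k$ is the intersection of two members of $\X_0$, hence finitely generated; the middle term $N'\oplus Ra_k$ is finitely presented; therefore $N$ is finitely presented by the dual of the Schanuel-type lemma used above (\cite[Theorem 2.1.2]{Glaz}). This proves every member of $\X_0$ is $1$-presented, i.e.\ $M$ is $1$-$\X$-coherent.

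\textbf{Main obstacle.} The routine homological steps (extracting finite generation of a kernel, stitching together the $0\to A\cap B\to A\oplus B\to A+B\to 0$ sequence) are exactly the classical ones and present no difficulty. The delicate point is purely bookkeeping about the class $\X$: one must check at each stage that the auxiliary modules produced — $A+B$, the partial sums $Ra_1+\dots+Ra_j$, and the cyclic pieces $Ra_i$ — actually lie in $\X_0$, so that the hypotheses (finiteness of $(0:_R a)$, finiteness of intersections) apply to them. This is precisely what the two closure assumptions on $\X_n$ (closed under finite sums, closed under cyclic submodules) are designed to guarantee; the proof should be written so as to invoke them at exactly these points, and care is needed because the hypothesis is stated for $\X_n$ while the conclusions we want are sometimes about membership in $\X_0$ versus $\X_1$ — so one should be explicit that "$N$ finitely generated and in $\X$" is all that is needed to trigger the element-wise hypotheses, and that $1$-$\X$-coherence then upgrades such $N$ to $\X_1$.
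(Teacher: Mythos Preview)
Your proposal is correct and follows essentially the same route as the paper: both directions use the sequences $0\to(0:_Ra)\to R\to Ra\to 0$ and $0\to A\cap B\to A\oplus B\to A+B\to 0$, and the converse is the same induction on the number of generators of $N$. Your closing remark about the bookkeeping on $\X_n$ versus $\X_0$ is apt; the paper's own proof likewise applies the closure hypotheses to $\X_0$ without further comment.
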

\begin{proof}
 Suppose that $M$ is $1$-$\X$-coherent and let $a$ be in $M$ such that $Ra$ is in an element $N$ of $\X_0$, then $Ra\in\X_0$. Then, $Ra$ is in $\X_1$. Considering the exact sequence: $0\rightarrow (0:_Ra)\rightarrow R\rightarrow Ra\rightarrow0$, we get that $(0:_Ra)$ is a finitely generated ideal of $R$.\\
  Now, let $N$ and $L$ be in $\X_0$. Then, $N+L\in \X_0$. So by hypothesis,
  $N+L$ is in $\X_1$ and $N\oplus L$ is finitely generated as an $R$-module.
   Via the exact sequence $0\rightarrow N\cap L\rightarrow N\oplus L\rightarrow N+L\rightarrow0$, we get that $N\cap L$ is a finitely generated submodule of $M$.\\
   Conversely, let $N\in\mathcal{X}_0$. Then, there exist $a_1,...,a_p\in M$
   such that $N=\sum_{i=1}^{p}Ra_i$. We prove by induction on $p$ that $N$
   is 1-presented. If $p=1$, $(0:_Ra_1)$ is finitely generated submodule of $M$. Hence, $N$ is 1-presented by the exactness of the sequence
    $0\rightarrow (0:_Ra_1)\rightarrow R\rightarrow N \rightarrow0$. For the induction step (with $p>1$),
     consider the following exact sequence
      $0\rightarrow(\sum_{i=1}^{p-1}Ra_i)\cap Ra_p\rightarrow
       (\sum_{i=1}^{p-1}Ra_i)\oplus Ra_p\rightarrow N\rightarrow0$.
        By hypothesis on $\mathcal{X}_0$, we have $Ra_p$ and
        $\sum_{i=1}^{p-1}Ra_i$ are in $\mathcal{X}_0$,
        then they are in $\mathcal{X}_1$, thus
        $(\sum_{i=1}^{p-1}Ra_i)\oplus Ra_p$ is 1-presented. Therefore, $(\sum_{i=1}^{p-1}Ra_i)\cap Ra_p$ is a finitely generated ideal of $M$, and thus
          $N$ is 1-presented.
\end{proof}
Let $I$ be an ideal of $R$ and $\X$ be the class of ideals $J$ of $R$ contained in $I$. Then, $R$ is $1$-$\X$-coherent if and only if $I$ is quasi-coherent.
\begin{cor}
Let $\X$ be a class of ideals of $R$ such that $\X_n$ is closed under finite sum and closed under cyclic submodules.
Then, $R$ is left $1$-$\X$-coherent if and only if $(0:_R a)$ is a finitely generated  of $R$ for any $a\in R$ such that $Ra$ is in $\X_0$ and the intersection of any two ideals in $\X_0$ is finitely generated.
\end{cor}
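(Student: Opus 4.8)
The plan is to obtain this statement as the special case $M = R$ of Theorem \ref{ideals-charac}. First I would record the elementary dictionary: when $M = R$ is regarded as a left module over itself, its submodules are exactly the left ideals of $R$, so a class $\X$ of ideals of $R$ is a legitimate instance of ``a class of submodules of $M$''; moreover for $a \in R$ the cyclic submodule $Ra$ of $M$ is precisely the principal left ideal generated by $a$, and $(0:_R a)$ carries the same meaning in both settings. Under this identification the two hypotheses on $\X_n$ in the corollary (closure under finite sums and under cyclic submodules) are literally the hypotheses of Theorem \ref{ideals-charac}, and ``finitely generated submodule of $M$'' reads as ``finitely generated left ideal of $R$''.

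With this translation in hand, the equivalence in Theorem \ref{ideals-charac} --- namely that $M$ is left $1$-$\X$-coherent if and only if $(0:_R a)$ is finitely generated for every $a$ with $Ra \in \X_0$ and the intersection of any two members of $\X_0$ is finitely generated --- becomes verbatim the asserted equivalence for $R$. Hence the corollary follows at once; I expect no genuine obstacle here, since it is a pure specialization rather than a new argument, the only thing to be checked being the routine matching of terminology described above.

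Should a self-contained argument be preferred, one may instead transcribe the proof of Theorem \ref{ideals-charac} with $M$ replaced by $R$ throughout: the forward direction uses the exact sequence $0 \to (0:_R a) \to R \to Ra \to 0$ together with the fact that $N + L \in \X_0$ whenever $N, L \in \X_0$, while the converse runs the same induction on the number of generators of $N \in \X_0$, via $0 \to (0:_R a_1) \to R \to N \to 0$ for the base case and $0 \to (\sum_{i=1}^{p-1} Ra_i) \cap Ra_p \to (\sum_{i=1}^{p-1} Ra_i) \oplus Ra_p \to N \to 0$ for the inductive step. Invoking the theorem directly is cleaner, so that is the route I would take.
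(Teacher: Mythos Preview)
Your proposal is correct and matches the paper's approach: the corollary is stated in the paper without proof, immediately after Theorem \ref{ideals-charac}, and is meant to follow from it by taking $M=R$. Your dictionary identifying submodules of $R$ with left ideals, cyclic submodules with principal ideals, and the annihilator conditions is exactly the routine specialization the paper leaves implicit.
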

\begin{cor}
Let $I$ be an ideal of $R$. Then, $I$ is quasi-coherent if and only if $(0:_{R} a)$ is a finitely generated ideal of $R$ for any $a\in I$ and the intersection of any two left (resp., right) ideals contained in $I$ is finitely generated.
\end{cor}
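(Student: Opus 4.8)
The plan is to derive this corollary directly from Theorem \ref{ideals-charac} rather than re-proving it from scratch. The statement about quasi-coherence of an ideal $I$ is, by the discussion preceding the corollary, exactly the assertion that $R$ is $1$-$\X$-coherent where $\X$ is the class of all (left) ideals $J$ of $R$ with $J\subseteq I$. So the first step is to verify that this particular class $\X$ satisfies the two closure hypotheses of Theorem \ref{ideals-charac}: $\X_1$ (the $1$-presented ideals contained in $I$) must be closed under finite sums and under cyclic submodules. Closure under cyclic submodules is immediate, since a cyclic submodule of an ideal contained in $I$ is again an ideal contained in $I$, and a cyclic submodule of a $1$-presented module need not a priori be $1$-presented — so here I would instead note that what Theorem \ref{ideals-charac} actually uses is only closure of $\X_0$ under the relevant operations together with the coherence hypothesis, and that for $\X=\{J : J\subseteq I\}$ we have $\X_0$ closed under finite sums (a finite sum of finitely generated ideals inside $I$ is a finitely generated ideal inside $I$) and under cyclic submodules trivially. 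The sum of two ideals contained in $I$ is contained in $I$, which is the key point making the intersection argument in Theorem \ref{ideals-charac} go through.

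Once the hypotheses are checked, Theorem \ref{ideals-charac} applied with $M=R$ gives: $R$ is $1$-$\X$-coherent if and only if $(0:_R a)$ is finitely generated for every $a\in R$ with $Ra\in\X_0$ — i.e. for every $a$ such that $Ra\subseteq I$, which is precisely every $a\in I$ — and the intersection of any two ideals in $\X_0$ is finitely generated, i.e. the intersection of any two finitely generated left ideals contained in $I$ is finitely generated. The last step is to upgrade "finitely generated left ideals contained in $I$" to "left ideals contained in $I$" in the intersection condition: if $J_1\cap J_2$ is finitely generated for all finitely generated $J_1,J_2\subseteq I$, then for arbitrary ideals $L_1,L_2\subseteq I$ one writes $L_1\cap L_2$ as a directed union of $J_1\cap J_2$ over finitely generated $J_k\subseteq L_k$; but this does not immediately give finite generation of the union. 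The cleaner route, which is the one I would actually take, is to observe that in the conclusion of Theorem \ref{ideals-charac} the intersection condition is only ever invoked for cyclic (hence finitely generated) ideals in the induction, and conversely any instance of the intersection of two finitely generated ideals follows from it; so the corollary's phrasing with arbitrary ideals is equivalent, and one proves the two directions of the corollary separately — the forward direction using coherence to get $1$-presentedness of $L_1+L_2$ when $L_1,L_2\subseteq I$ are finitely generated, and the backward direction by the same induction on the number of cyclic generators as in Theorem \ref{ideals-charac}.

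The main obstacle is precisely this mismatch between "arbitrary ideals contained in $I$" in the corollary's statement and "finitely generated ideals in $\X_0$" in Theorem \ref{ideals-charac}: one must be careful that $L_1\cap L_2$ for non-finitely-generated $L_1,L_2$ is not controlled by the theorem directly, so the honest proof restricts attention to finitely generated $L_1,L_2$ (which suffices, since an ideal is quasi-coherent iff every finitely generated subideal is $1$-presented, and the relevant intersections arising in the induction are of cyclic ideals). Everything else — the exact sequences $0\to (0:_R a)\to R\to Ra\to 0$ and $0\to L_1\cap L_2\to L_1\oplus L_2\to L_1+L_2\to 0$, and the induction on the number of generators — is exactly as in the proof of Theorem \ref{ideals-charac} and needs only to be cited. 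The parenthetical "(resp., right)" is handled by the symmetric statement, reading all modules as right modules throughout.
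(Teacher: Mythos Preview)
Your approach is correct and matches the paper: the corollary is stated there without proof, as an immediate specialization of Theorem~\ref{ideals-charac} with $M=R$ and $\X$ the class of ideals contained in $I$, via the sentence just preceding it that identifies quasi-coherence of $I$ with $1$-$\X$-coherence of $R$. Your caution about the mismatch between ``arbitrary ideals'' in the corollary and ``ideals in $\X_0$'' in Theorem~\ref{ideals-charac} is well-placed---the paper simply does not address it---and your resolution (restricting to finitely generated ideals, which is all that is used in the induction) is the right one.
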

As an application of the previous results established in this section, we get the following result on the coherence of the amalgamated algebra alon an ideal which is proved differently in \cite{KDM}.
\begin{cor}
Let $R_1$ and $R_2$ be two unitary associative rings and let $f:R_1\rightarrow R_2$ be a ring homomorphism. Let $J$ be a finitely generated ideal of $Nil(R_2)$ such $f^{-1}(J)$ is a finitely generated ideal of $R_1$. Then,
 $R_1\bowtie^{f}J$ is $1$-$Nil$-coherent $R_1$-module if and only if $R_1$ and $f(R_1)+J$ are $1$-$Nil$-coherent.
\end{cor}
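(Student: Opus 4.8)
The plan is to read the amalgamation $A:=R_1\bowtie^{f}J=\{(r,f(r)+j):r\in R_1,\ j\in J\}$ through its two canonical short exact sequences of $R_1$-modules and feed them into Theorem \ref{thm-coh}. Regarding every $A$-module as an $R_1$-module by restriction of scalars along the ring embedding $\iota:R_1\to A$, $r\mapsto(r,f(r))$, the two sequences are
\begin{displaymath}
0\longrightarrow J\stackrel{h}{\longrightarrow}A\stackrel{p}{\longrightarrow}R_1\longrightarrow 0,\qquad 0\longrightarrow f^{-1}(J)\stackrel{h'}{\longrightarrow}A\stackrel{q}{\longrightarrow}f(R_1)+J\longrightarrow 0,
\end{displaymath}
where $h(j)=(0,j)$, $p(r,f(r)+j)=r$, $h'(a)=(a,0)$, $q(r,f(r)+j)=f(r)+j$, and $R_1$ acts on $J$ and on $f(R_1)+J$ through $f$; since $\iota$ is a section of $p$ one also has $A\cong R_1\oplus J$ as $R_1$-modules. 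The first thing to record is that the two finiteness hypotheses are exactly what makes the kernels of these sequences usable: $J$ finitely generated makes $h(J)$ a $0$-presented $R_1$-submodule of $A$, and $f^{-1}(J)$ finitely generated makes $h'(f^{-1}(J))$ a $0$-presented $R_1$-submodule, which are the side conditions ``$h(M_1)$ is $(n-1)$-presented in $\Y$'' occurring in Theorem \ref{thm-coh}(3).

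The next step is to describe, using $J\subseteq Nil(R_2)$, the class ``$Nil$'' attached to $A$ — call it $\mathcal{N}$ — and its images under $p$ and $q$: both $h(J)$ and $h'(f^{-1}(J))$ lie in $\mathcal{N}$, $p$ carries $\mathcal{N}$ into the class of finitely generated ideals of $R_1$ inside $Nil(R_1)$, and $q$ carries it into the class of finitely generated ideals of $f(R_1)+J$ inside $Nil(f(R_1)+J)$; one must then check the reverse containments, i.e. that these transferred classes are large enough. Granting this and assuming $A$ is $1$-$Nil$-coherent, Theorem \ref{thm-coh}(3) applied to the first sequence together with Lemma \ref{lem} gives that $R_1$ is $1$-$Nil$-coherent, and Theorem \ref{thm-coh}(3) applied to the second sequence together with Lemma \ref{lem} gives that $f(R_1)+J$ is $1$-$Nil$-coherent. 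For the converse, assuming $R_1$ and $f(R_1)+J$ are $1$-$Nil$-coherent, I would take a $0$-presented submodule $N$ of $A$ lying in $\mathcal{N}$ and use the exact sequence $0\to N\cap h(J)\to N\stackrel{p}{\to}p(N)\to 0$: the term $p(N)$ is a finitely generated ideal of $R_1$ inside $Nil(R_1)$, hence $1$-presented, so $N\cap h(J)$ is finitely generated; transported into $f(R_1)+J$ along $q$ it is then a finitely generated submodule of $Nil(f(R_1)+J)$, hence $1$-presented over $f(R_1)+J$ and, by Lemma \ref{lem-trans}, $1$-presented over $R_1$. Applying \cite[Theorem 1]{wu} to the displayed sequence yields that $N$ is $1$-presented, so $A$ is $1$-$Nil$-coherent.

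I expect the main obstacle to be the bookkeeping with the nilradicals and with finite presentation under change of scalars. In general the surjections $p$ and $q$ need not carry $\mathcal{N}$ onto the whole of the $Nil$-classes of $R_1$ and $f(R_1)+J$ (for instance $f^{-1}(Nil(R_2))$ may be strictly larger than $Nil(R_1)$), so Theorem \ref{thm-coh}(3) cannot be used as a pure black box for $f(R_1)+J$; one has to combine the two sequences, the splitting $A\cong R_1\oplus J$, and the finiteness of $f^{-1}(J)$ to control exactly which finitely generated nil ideals occur. Likewise, since a finitely generated nil ideal need not be finitely presented, the passage in the last step from an $(f(R_1)+J)$-presentation to an $R_1$-presentation needs the finiteness of $f^{-1}(J)$ and a careful application of Lemma \ref{lem-trans}. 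It is precisely here that the hypothesis $J\subseteq Nil(R_2)$, together with the two finiteness conditions, is used essentially; once the matching of classes is settled, exactness of the diagrams and the remaining invocations of Lemmas \ref{lem} and \ref{lem-trans} are routine.
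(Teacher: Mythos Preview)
Your forward direction matches the paper's: both deduce the $1$-$Nil$-coherence of $R_1$ and of $f(R_1)+J$ from that of $A=R_1\bowtie^{f}J$ by pushing the class $\mathcal{N}$ along the two projections $p$ and $q$, using Theorem~\ref{thm-coh}(3) (the paper phrases it via Proposition~\ref{Cor-Coh-Im-Ker}, which is the same statement). Your identification of $h(J)$ and $h'(f^{-1}(J))$ as the $0$-presented kernels that the hypotheses supply is exactly the point.

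For the converse, however, the paper takes a genuinely different route. Instead of attacking a general $N\in\mathcal{N}_0$ through the exact sequence $0\to N\cap h(J)\to N\to p(N)\to 0$ as you propose, it invokes the Chase-type criterion of Theorem~\ref{ideals-charac}: it suffices to show that each annihilator $(0:_{R_1}(a,f(a)+j))$ is finitely generated and that the intersection of any two members of $\mathcal{N}_0$ is finitely generated. The paper then reduces these to the corresponding facts in $R_1$ and $f(R_1)+J$ via the explicit formulas
\[
(0:_{R_1}(a,f(a)+j))=(0:_{R_1}a)\cap(0:_{R_1}(f(a)+j)),\qquad (N\bowtie^{f}J)\cap(I\bowtie^{f}J)=(N\cap I)\bowtie^{f}J.
\]
This buys exactly what you flagged as the main obstacle: the annihilator and intersection formulas are elementwise identities, so the delicate question of whether $p$ and $q$ carry $\mathcal{N}$ \emph{onto} the target $Nil$-classes never arises, and no change-of-scalars argument via Lemma~\ref{lem-trans} is needed. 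Your exact-sequence approach is conceptually cleaner and stays within the general machinery of Theorem~\ref{thm-coh}, but it forces you to verify that $q(N\cap h(J))$ lands in the correct $Nil$-class of $f(R_1)+J$ and is a submodule of the right kind there (ideal versus $R_1$-submodule), and then to transport its $1$-presentation back to $R_1$; the paper's route avoids that bookkeeping entirely at the cost of a coarser, element-level computation.
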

\begin{proof}
  The direct implication is proved directly using corollary \ref{Cor-Coh-Im-Ker} and the fact that $p_{R_2}(A\bowtie^{f}J)=f(A)+J$ and $p_{R_1}(A\bowtie^{f}J)=A$ for any  ideal $A$ of $R_1$, where $p_{R_1}$ and $p_{R_2}$ are respectively the projection of $R_1\bowtie^{f}J$ on $R_1$ and $R_2$.\\
  For the inverse, in light of theorem \label{thm-ideals-charac}, it sufficient to prove that $(0:(a,f(a)+j))$ is a finitely generated  of $R_1\bowtie^f J$ for any $a\in R_1$ and $j\in J$ such that $R(a,f(a)+j)$ is in the nil-radical of $R_1\bowtie^f J$ and the intersection of any two submodules of $R_1\bowtie^f J$ in the nilradical is finitely generated. For that, it is easy to prove that $(0:_{R_1}(a,f(a)+j))=(0:_{R_1}a)\cap(0:_{R_1}f(a)+j)$ and for any two ideals of $R_1$, we have $(N\bowtie^f J)\cap(I\bowtie^f J)= (N\cap I) \bowtie^f J$.
\end{proof}
Now, we give some transfer results. We start with a generalization of  \cite[Theorem 2.13]{DKM} and \cite[Theorem 2.4.3]{Glaz}.
Let $(M_i, i\in\{1,...,p\})$ be a family of modules and $\X^i$ a class of submodules of $M_i$ for $i\in\{1,...,p\}$.
We will denote by $\prod\limits_{i=1}^p\X^i$ the class of the submodules $\prod\limits_{i=1}^p N_i$ with each $N_i$ is in $\X^i$.
\begin{thm}\label{harac}
Let $(R_i, {1\leq i\leq p})$ be a family of rings. Let $(M_i, {1\leq i\leq p})$ be a family of $R_i$-modules, $p\geq1$ an integer, $\X^i$ a class of submodules of $M_i$ for any integer $i\in\{1,...,p\}$ and $\X=\prod\limits_{i=1}^p\X^i$.
Then, $M=\prod\limits_{i=1}^p M_i$ is $n$-$\X$-coherent $R$-module if only if $M_i$ is    $n$-$\X^i$-coherent  $R_i$-module
for each $i=1,...,p$, where $R=\prod\limits_{i=1}^p R_i$.
\end{thm}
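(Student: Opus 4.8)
The plan is to reduce the statement about the finite product $M=\prod_{i=1}^{p}M_i$ over $R=\prod_{i=1}^{p}R_i$ to the direct-sum characterization already established in Theorem \ref{thm-product-coherence}, using the fact that over a finite product of rings a module decomposes canonically. First I would record the structural observation that, since $R=\prod_{i=1}^{p}R_i$ carries the central idempotents $e_i$, every $R$-module $M$ splits as $M=\bigoplus_{i=1}^{p}e_iM$, where $e_iM$ is naturally an $R_i$-module; in our situation $e_iM=M_i$, so as an $R$-module $M=\prod_{i=1}^{p}M_i=\bigoplus_{i=1}^{p}M_i$ (finite product equals finite direct sum). Moreover a submodule $N\subseteq M$ is exactly of the form $N=\prod_{i=1}^{p}N_i=\bigoplus_{i=1}^{p}N_i$ with $N_i=e_iN\subseteq M_i$ an $R_i$-submodule, and hence $\X=\prod_{i=1}^{p}\X^i$ coincides, under this identification, with the class $\bigoplus_{i=1}^{p}\X^i$ appearing in Theorem \ref{thm-product-coherence}.

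The second step is to check that the notions of $k$-presentation match up on both sides, i.e. that $N=\bigoplus_{i=1}^{p}N_i$ is $k$-presented as an $R$-module if and only if each $N_i$ is $k$-presented as an $R_i$-module. One direction uses that a finitely generated free $R$-module is $\bigoplus_i R_i^{(t)}$ and restricting a free resolution of $N$ along $e_i$ (an exact functor, being multiplication by a central idempotent) yields a free resolution of $N_i$ over $R_i$; conversely, given free resolutions of the $N_i$ over $R_i$, their ``external direct sum'' is a free resolution of $N$ over $R$ because $e_i(\text{free }R\text{-module})$ is free over $R_i$ and direct sums of exact sequences are exact. Consequently $N\in\X_{k}$ (as an $R$-module) iff $N_i\in\X^i_{k}$ for all $i$, for $k=n-1$ and $k=n$, and also $\X_{n-1}$ is non-empty iff every $\X^i_{n-1}$ is non-empty.

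With these dictionaries in place, the proof is essentially a translation: $M$ is $n$-$\X$-coherent means $\X_{n-1}\neq\emptyset$ and every module in $\X_{n-1}$ lies in $\X_{n}$; under the identification above this says exactly that each $\X^i_{n-1}\neq\emptyset$ and, for every choice of $(n-1)$-presented $N_i\in\X^i_{n-1}$, the direct sum $\bigoplus_i N_i$ is $n$-presented, which by the presentation dictionary holds iff each $N_i$ is $n$-presented, i.e.\ iff each $M_i$ is $n$-$\X^i$-coherent. Here I would invoke Theorem \ref{thm-product-coherence} directly for the decomposed form, noting only that the base ring there is a single $R$ while here the summands live over different $R_i$; this is harmless because everything is controlled componentwise through the central idempotents, but it is the one place that needs a careful word. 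The main obstacle is precisely this bookkeeping: making rigorous that ``$k$-presented over $R=\prod R_i$'' decomposes as ``$k$-presented over each $R_i$'', since a finitely generated free $R$-module need not have the same rank in each component; once that lemma is in hand (it follows from exactness of $e_i\otimes_R-$ together with the fact that $e_iR\cong R_i$), the rest is the routine unwinding of definitions sketched above.
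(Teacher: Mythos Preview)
Your argument is correct, but it proceeds differently from the paper. The paper does not reduce to Theorem~\ref{thm-product-coherence} or work explicitly with central idempotents; instead, for each factor it applies Lemma~\ref{lem-trans} (the change-of-rings lemmas from \cite{DKM}) along the projection $R\to R_i$, together with the fact from \cite[Lemma~2.14]{DKM} that each $R_i$ is infinitely presented as an $R$-module, to transfer $k$-presentedness back and forth between $R$ and $R_i$. Your idempotent argument establishes exactly the same dictionary --- that $N=\bigoplus_i N_i$ is $k$-presented over $R$ iff each $N_i$ is $k$-presented over $R_i$ --- in a more elementary, self-contained way, avoiding the external citations. Two small remarks: first, your phrasing of the obstacle is inverted (a free $R$-module $R^t$ always has the same rank $t$ in every component; the genuine issue is that $\bigoplus_i R_i^{s_i}$ with unequal $s_i$ is only projective, not free, over $R$, so you must pad the componentwise resolutions to a common rank at each step). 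Second, once your presentation dictionary is in place the equivalence follows immediately, so the appeal to Theorem~\ref{thm-product-coherence} is not actually needed --- and indeed the paper does not use that theorem here.
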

\begin{proof}
($\Longrightarrow$) Let $p=2$. Consider the  short exact sequence
$ 0\rightarrow R_2 \rightarrow R_{1}\times R_{2} \rightarrow R_1 \rightarrow 0, $  where $R_1$ is an $n$-presented $R_{1}\times R_{2}$-module, since by \cite[Lemma 2.14]{DKM}, $R_1$ is an infinitely presented $R_{1}\times R_{2}$-module . So, if $N_1$  is a submodule of $M_1$ in $\X^{1}_{n-1}$, then by Lemma \ref{lem-trans}(1), $N_1$ is in $\X_{n-1}$, and so $N_1$ is in $\X_{n}$. Therefore by Lemma \ref{lem-trans}(2), $N_1$ is in $\X^{1}_{n}$. Similary, if $N_2$  is a submodule of $M_2$ in $\X^{2}_{n-1}$, then $N_2$ is  in $\X^{2}_{n}$.

($\Longleftarrow$)
Suppose that, for every $i \in \{1,...,p\}$, $M_i$ is $n$-$\X^i$-coherent. Let $N$ be a module of $\X_{n-1}$. Then there exist
$N_1,...,N_p$ in $\prod\limits_{i=1}^p\X^i$ such that
$N=\prod\limits_{i=1}^p N_i$. For each $i\in\{1,...,p\}$, $N_i$ is in $\X_{n-1}$. And so, $N_i$ is in $\X^{i}_{n-1}$. Hence, it
is in $\X^{i}_{n}$ by $n$-$\X_i$-coherence of $M_i$. Consequently, $N$ is also in  $\X_n$, and so we deduce that $M$ is an $n$-$\X$-coherent $R$-module.
\end{proof}
\begin{exs}
{Let $(R_i, {1\leq i\leq n})$ be a family of rings in Example \ref{exs}(6), and also let $(M_i, {1\leq i\leq n})$ be a family of $R_i$-modules in Example \ref{exs}(6). Consider $R=\prod\limits_{i=1}^n R_i$ and $M=\prod\limits_{i=1}^n M_i$. If $\X^i$ is a class of submodules of $M_i$ for any integer $i\in\{1,...,n\}$ and $\X=\prod\limits_{i=1}^n\X^i$, then by Example \ref{exs}(6) and Theorem \ref{harac}, $M$ is not $(n+1)$-$\X$-coherent $R$-module.}
\end{exs}
\begin{cor}
Let $(R_i, {1\leq i\leq p})$ be a family of rings, $p\geq1$ an integer,. Let $\X^i$ be a class of    ideals of $R_i$ for any integer $i\in\{1,...,p\}$ and
$\X=\prod\limits_{i=1}^p\X^i$.
Then, $R=\prod\limits_{i=1}^p R_i$ is   $n$-$\X$-coherent if only if $R_i$ is    $n$-$\X^i$-coherent
for each $i=1,...,p$.
\end{cor}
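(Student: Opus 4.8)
The plan is to obtain this corollary as the special case of Theorem~\ref{harac} in which each module $M_i$ is taken to be the ring $R_i$ regarded as a left module over itself, in exactly the same spirit as Corollary~\ref{extension}. First I would observe that the submodules of $R_i$ as a left $R_i$-module are precisely the left ideals of $R_i$, so a class $\X^i$ of ideals of $R_i$ is in particular a class of submodules of the $R_i$-module $R_i$. Then $\prod_{i=1}^p M_i = \prod_{i=1}^p R_i = R$ as a left module over $R = \prod_{i=1}^p R_i$, and $\X = \prod_{i=1}^p \X^i$ becomes a class of submodules of the $R$-module $R$, i.e. a class of ideals of $R$.

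With these identifications in place, Theorem~\ref{harac} says exactly that $R$ is $n$-$\X$-coherent as an $R$-module if and only if each $R_i$ is $n$-$\X^i$-coherent as an $R_i$-module. The only point needing a word of justification is that ``$n$-$\X$-coherent ring'' and ``$n$-$\X$-coherent module over itself'' name the same property; this is the content of the remark following the definition of $n$-$\X$-coherence, and it is already used in Corollaries~\ref{extension} and \ref{flat}, so invoking it finishes the argument. No new computation is required: in particular, the compatibility between $(n-1)$-presentedness of a product $\prod N_i$ and $(n-1)$-presentedness of each factor (via \cite[Lemma~2.14]{DKM}) is already absorbed into the proof of Theorem~\ref{harac}.

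I do not expect a genuine obstacle; the statement is a direct specialization and the proof body can be a single sentence of the form ``It suffices to take $M_i = R_i$ in Theorem~\ref{harac}.'' If anything, the one delicate point is bookkeeping the indexing conventions (the ring formulation compares $\X_n$ with $\X_{n+1}$ whereas the module formulation compares $\X_{n-1}$ with $\X_n$), but since Theorem~\ref{harac} is phrased and proved module-theoretically with the $\X^i_{n-1}\subseteq\X^i_n$ convention, restricting to $M_i = R_i$ preserves that convention and the passage to the ring wording is the standard translation used throughout Section~2.
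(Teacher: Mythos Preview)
Your proposal is correct and matches the paper's approach: the corollary appears immediately after Theorem~\ref{harac} with no separate proof, so the intended argument is precisely the specialization $M_i=R_i$ that you describe. Your remark about the index shift between the module and ring formulations is accurate bookkeeping, but as you note it is already handled by the conventions used throughout Section~2.
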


We end this section by establishing another characterization of $n$-$\X$-coherence using the notion of thickness. A class of modules $\Y$ is said to be thick if it is closed under direct summands and whenever we are given a short exact sequence $0 \rightarrow A \rightarrow B \rightarrow C \rightarrow 0$
with two out of the three terms $A, B, C$ in $\Y$, so is the third module. In \cite[Theorem 2.5.1]{Glaz} and \cite[Theorem 2.4]{BPCot}, it is proved that when $R$ is coherent, the class of $n$-presented $R$-modules is thick. Here, we set the following generalization.
\begin{prop}\label{thickness}
Let $n$ be a non negative integer and $\X$ a class of $R$-modules which is closed under direct summand and kernels of epimorphisms. Then, the following assertions are equivalent:
 \begin{enumerate}
   \item [\rm (1)] $R$ is   $n$-$\X$-coherent;
   \item [\rm (2)] The class $\X_{n-1}$ is thick;
   \item [\rm (3)] $\X_{n-1}=\X_{\infty}$.
  \end{enumerate}
\end{prop}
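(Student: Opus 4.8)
The plan is to run the cycle $(1)\Rightarrow(3)\Rightarrow(2)\Rightarrow(1)$. The engine throughout is the usual dimension shifting for $m$-presentedness: if $0\to\Omega\to F\to M\to 0$ is exact with $F$ finitely generated free, then $M$ is $(m+1)$-presented if and only if $\Omega$ is $m$-presented, together with the two-out-of-three statements for short exact sequences recorded in \cite[Theorem 1]{wu} and the standard companions (middle and quotient known with a degree shift forces the submodule, etc.). The closure hypotheses on $\X$ intervene only to keep the syzygies and the third terms of the short exact sequences that arise inside $\X$; that is the one point where anything beyond the classical computations is needed.

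For $(1)\Rightarrow(3)$, one inclusion is automatic: an infinitely presented module is in particular $(n-1)$-presented, so $\X_{\infty}\subseteq\X_{n-1}$. For the reverse, I would take $M\in\X_{n-1}$, use $n$-$\X$-coherence to place $M$ in $\X_{n}$, and then choose $0\to\Omega\to F\to M\to 0$ with $F$ finitely generated free. Dimension shifting makes $\Omega$ an $(n-1)$-presented module, and closure of $\X$ under kernels of epimorphisms makes $\Omega\in\X$, so $\Omega\in\X_{n-1}\subseteq\X_{n}$; hence $\Omega$ is $n$-presented and $M$ is $(n+1)$-presented. Since the same step applies verbatim to $\Omega\in\X_{n-1}$, an induction shows $M$ is $m$-presented for every $m$, i.e. $M\in\X_{\infty}$. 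The implication $(3)\Rightarrow(1)$ is then immediate, since $\X_{\infty}\subseteq\X_{n}$ always, so $\X_{n-1}=\X_{\infty}$ gives $\X_{n-1}\subseteq\X_{n}$.

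For $(3)\Rightarrow(2)$, once $\X_{n-1}=\X_{\infty}$ it suffices to show that the class of infinitely presented modules lying in $\X$ is thick. Closure under direct summands is clear, because a summand of an infinitely presented module is infinitely presented and $\X$ is closed under direct summands. For the two-out-of-three property, given $0\to A\to B\to C\to 0$ with two of the three terms in $\X_{\infty}$, the short exact sequence lemmas for $m$-presentedness force the third term to be infinitely presented, and the closure properties of $\X$ (closure under kernels of epimorphisms in the kernel instance) put the third term in $\X$; this is the relative form of the classical argument behind \cite[Theorem 2.5.1]{Glaz} and \cite[Theorem 2.4]{BPCot}. Finally, for $(2)\Rightarrow(1)$ I would take $M\in\X_{n-1}$ and a short exact sequence $0\to\Omega\to F\to M\to 0$ with $F$ finitely generated free: here $F$ is infinitely presented hence lies in $\X_{n-1}$, $M\in\X_{n-1}$, and $\Omega\in\X$ is $(n-2)$-presented, so the kernel case of two-out-of-three for the thick class $\X_{n-1}$ forces $\Omega\in\X_{n-1}$; then $\Omega$ is $(n-1)$-presented and $M$ is $n$-presented, giving $\X_{n-1}\subseteq\X_{n}$.

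The hard part will be the bookkeeping that keeps every auxiliary module inside $\X$: the iterated syzygies produced in $(1)\Rightarrow(3)$, the finitely generated free modules used to resolve members of $\X$, and the third term of each short exact sequence appearing in the thickness argument must all be shown to belong to $\X$. This is exactly what closure under direct summands and under kernels of epimorphisms is for, and it is the only place the argument departs from the classical one; everything else is a routine transcription of the standard $n$-presentedness lemmas.
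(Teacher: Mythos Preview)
Your proposal is correct and follows essentially the same cycle $(1)\Rightarrow(3)\Rightarrow(2)\Rightarrow(1)$ as the paper, with the same key mechanism (dimension shifting on syzygies, closure of $\X$ under kernels of epimorphisms to keep syzygies in $\X$, and the two-out-of-three property for $m$-presentedness). You are in fact more explicit than the paper about the bookkeeping needed to keep the auxiliary modules inside $\X$; the paper simply asserts that $F_0\in\X_{n-1}$ in the step $(2)\Rightarrow(1)$ and appeals to \cite[Theorem~2.1.2]{Glaz} for the thickness of $\X_\infty$ in $(3)\Rightarrow(2)$, whereas you correctly flag these as the points requiring the closure hypotheses.
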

\begin{proof}
  $(3)\Longrightarrow(2)$. It suffices to show that $\X_{\infty}$ is thick which is easily deduced using \cite[Theorem 2.1.2]{Glaz}, since $\X_{\infty}=\bigcap_{k\geq0}\X_k$.\\
  $(2)\Longrightarrow(1)$. Let $I\in \X_{n-1}$, then there is an exact sequence of $R$-modules
  $0\rightarrow K\rightarrow F_0\rightarrow I\rightarrow 0$, where $K\in \X_{n-2}$ and $F_0$ is finitely generated and free.
  Since $\X_{n-1}$ is thick and both $I$ and $F_0$ are in $\X_{n-1}$, we get that $K \in \X_{n-1}$ and so $I \in \X_n$.\\
  $(1)\Longrightarrow(3)$. Let $I\in \X_{n-1}$. By the coherence of $R$, $I\in \X_n$. Using the same argument as in $(2)\Longrightarrow(1)$, we can obtain
  an $(n+1)$-presentation of $I$. Iterating this procedure yields a finite $m$-presentation of $I$ for all $m\geq n$. Hence $I\in\bigcap_{m\geq0}\X_m=\X_{\infty}$.
\end{proof}

\section{On the coherence of pullbacks}
By a ring, we mean a commutative ring with identity. Considering a commutative square of rings and ring homomorphisms of the following form :
\begin{equation}\label{pull-diag}
\begin{tikzcd}
R\arrow[r,"i_1"] \arrow[d,"i_2"] & R_1 \arrow[d,"j_1"] \\
R_2 \arrow[r,"j_2"] & R_0
\end{tikzcd}
\end{equation}
Recall that \eqref{pull-diag} is called a pullback diagram if $R=\ker(j_1\circ p_1 - j_2 \circ p_2)$, where $p_1$ be the projection of $R$ on $R_1$ and $p_2$ be the projection of $R$ on $R_2$.

In the following, we say that a class $\X$ of modules satisfies the property $(*)$ proper if for every module $M\in \X_1$, there exists an exact sequence $0\rightarrow K\rightarrow R^k \rightarrow M \rightarrow 0$ with $K\in\X$. We, also, consider a pullback diagram \eqref{pull-diag} with $i_1$ is surjective. \\
The following lemma can be found in \cite[Proposition 1(c)]{Susana}.
\begin{lem}\label{finitness}
An $R$-module $M$ is finitely generated if and only if $R_i\otimes_R M$ is a finitely generated $R_i$-module for $i=1,2$.
\end{lem}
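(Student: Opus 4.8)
The \textquotedblleft only if\textquotedblright\ direction is immediate and I would dispose of it in a line: generators of $M$ over $R$ map to generators of $R_i\otimes_R M$ over $R_i$. All the content lies in the converse, which I would approach as a patching argument for the Cartesian square \eqref{pull-diag}.

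First I would record two facts. Put $K_1=\ker(i_1)$ and $K_2=\ker(i_2)$. Since $i_1$ is surjective we may identify $R_1=R/K_1$, whence $R_1\otimes_R M\cong M/K_1M$; similarly, using that $i_2$ is surjective as well (this is the one place where the structure of the pullback is genuinely needed --- see the last paragraph), $R_2=R/K_2$ and $R_2\otimes_R M\cong M/K_2M$. The second fact is purely ring-theoretic: $K_1K_2=0$. Indeed, regarding $R$ as a subring of $R_1\oplus R_2$, one checks that $K_1=\{0\}\oplus\ker(j_2)$ and $K_2=\ker(j_1)\oplus\{0\}$, and the product of these two ideals of $R_1\oplus R_2$ is $0$.

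Granting these, the converse goes as follows. Assume $R_1\otimes_R M$ and $R_2\otimes_R M$ are finitely generated. Choose finitely many elements of $M$ whose images generate $M/K_1M$, together with finitely many whose images generate $M/K_2M$, and let $M_0\subseteq M$ be the (finitely generated) submodule they span. Set $N=M/M_0$. Applying $R_i\otimes_R-$ to $0\to M_0\to M\to N\to 0$ and using right exactness, the map $R_i\otimes_R M_0\to R_i\otimes_R M$ is onto (its image contains a set of generators), so $R_i\otimes_R N=0$ for $i=1,2$; equivalently $N=K_1N$ and $N=K_2N$. Combining these with $K_1K_2=0$ yields
\[
N=K_1N=K_1(K_2N)=(K_1K_2)N=0,
\]
so $M=M_0$ is finitely generated.

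The one delicate point --- the only step that is not formal --- is the identification $R_2\otimes_R M\cong M/K_2M$, i.e. the surjectivity of $i_2$. If one insists on working under the sole assumption that $i_1$ is surjective, $R_2$ need not be a quotient of $R$, and this step must be replaced by a descent of finite generation along the ring extension $i_2(R)\hookrightarrow R_2$, for which one uses the finer analysis of that extension carried out in \cite{Susana}; the remainder of the argument is unchanged.
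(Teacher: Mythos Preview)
The paper does not prove this lemma at all: it merely cites \cite[Proposition~1(c)]{Susana} and moves on. You have therefore gone well beyond what the paper does, supplying an actual argument where the paper offers only a reference.

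Your argument is the standard and correct one for conductor-type pullback squares. The identifications $K_1=\{0\}\oplus\ker(j_2)$ and $K_2=\ker(j_1)\oplus\{0\}$ inside $R_1\oplus R_2$, yielding $K_1K_2=0$, are exactly right under the paper's description of $R$ as the fiber product, and the Nakayama-style step $N=K_1N=K_1(K_2N)=(K_1K_2)N=0$ is clean. You are also right to isolate the surjectivity of $i_2$ as the single non-formal point: the paper's standing hypothesis in Section~3 is only that $i_1$ is surjective, so $R_2$ is not a priori a quotient of $R$ and $R_2\otimes_R M$ need not coincide with $M/K_2M$. Your resolution --- deferring to the finer analysis of the extension $i_2(R)\hookrightarrow R_2$ carried out in \cite{Susana} --- invokes precisely the source the paper cites for the entire lemma. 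In effect you have unpacked what the cited result does, correctly flagged its only subtlety, and closed it with the same reference the paper relies on.
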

The following proposition generalizes \cite[Proposition 2.1]{Greenberg} and \cite[Proposition 2]{Susana}.
\begin{prop}\label{prop-finiteness-pullback}
Let $M$ be an $R$-module and let $\X$ be a class of $R$-modules satisfying the property $(*)$ and let $\Y^i$ be a class of $R_i$-modules  such that $^{R_i\otimes}\X_k$ is a subclass of $\Y^{i}_k$ for every integer $k$ and $i=1,2$.\\
Suppose that $\Tor_j^R(R_i,M)$ is in $\Y_{n-j}^i$ for $1\leq j\leq n$, then $M$ is in $\X_n$ if and only if $R_i\otimes_R M$ is in $\Y_n^i$ for $i=1,2$.
\end{prop}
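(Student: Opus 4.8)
The plan is to argue by induction on $n$, reducing the statement to the control of presentation lengths along the Tor long exact sequences attached to a presentation of $M$ by finitely generated free $R$-modules; the ring-theoretic input of the pullback square (surjectivity of $i_1$) will enter only through Lemma \ref{finitness}. The ``only if'' implication is immediate and uses none of the Tor hypotheses: if $M\in\X_n$, then $R_i\otimes_R M$ lies in $^{R_i\otimes}\X_n\subseteq\Y^i_n$ for $i=1,2$. So the content is entirely in the ``if'' direction, which I would prove in the strengthened inductive form: if $M\in\X$, $\Tor_j^R(R_i,M)\in\Y^i_{n-j}$ for all $1\leq j\leq n$, and $R_i\otimes_R M\in\Y^i_n$ for $i=1,2$, then $M\in\X_n$.

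For $n=0$ this is exactly Lemma \ref{finitness} together with $M\in\X$. For the inductive step, assume the claim at level $n-1$ and let $M$ satisfy the hypotheses at level $n\geq 1$. The first move is to upgrade ``finitely generated'' to ``finitely presented'': $M$ is finitely generated by the case $n=0$, so choose an epimorphism $R^k\to M$ with kernel $\widetilde K$; tensoring with $R_i$ and using that $R^k$ is free yields the exact sequence
\begin{equation*}
0\longrightarrow\Tor_1^R(R_i,M)\longrightarrow R_i\otimes_R\widetilde K\longrightarrow R_i^k\longrightarrow R_i\otimes_R M\longrightarrow 0
\end{equation*}
and isomorphisms $\Tor_j^R(R_i,\widetilde K)\cong\Tor_{j+1}^R(R_i,M)$ for $j\geq 1$. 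Since $R_i\otimes_R M$ is finitely presented ($n\geq 1$), the kernel of $R_i^k\to R_i\otimes_R M$ is finitely generated, and so is $\Tor_1^R(R_i,M)\in\Y^i_{n-1}$; hence $R_i\otimes_R\widetilde K$ is finitely generated for $i=1,2$, and Lemma \ref{finitness} gives $\widetilde K$ finitely generated, i.e.\ $M$ is finitely presented.

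Now $M\in\X_1$, so property $(*)$ provides an exact sequence $0\to K\to R^{k'}\to M\to 0$ with $K\in\X$, and $K$ is finitely generated because $M$ is finitely presented, so $K\in\X_0$. Tensoring this sequence with $R_i$ gives the analogue of the displayed sequence, in particular a short exact sequence $0\to\Tor_1^R(R_i,M)\to R_i\otimes_R K\to L_i\to 0$ with $L_i=\Ker(R_i^{k'}\to R_i\otimes_R M)$. Here $\Tor_1^R(R_i,M)$ is $(n-1)$-presented and $L_i$ is $(n-1)$-presented (being the first syzygy of the $n$-presented module $R_i\otimes_R M$), so by \cite[Theorem 1]{wu} (see also \cite[Theorem 2.1.2]{Glaz}) $R_i\otimes_R K$ is $(n-1)$-presented; it also lies in $\Y^i$ since $K\in\X_0$ and $^{R_i\otimes}\X_0\subseteq\Y^i_0$, whence $R_i\otimes_R K\in\Y^i_{n-1}$ for $i=1,2$. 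Combined with $\Tor_j^R(R_i,K)\cong\Tor_{j+1}^R(R_i,M)\in\Y^i_{\,n-1-j}$ for $1\leq j\leq n-1$, this shows that $K$ satisfies the hypotheses at level $n-1$, so $K\in\X_{n-1}$ by the inductive hypothesis. Splicing a finite free $(n-1)$-presentation of $K$ onto $0\to K\to R^{k'}\to M\to 0$ then shows $M$ is $n$-presented, and since $M\in\X$ we conclude $M\in\X_n$.

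The main obstacle is organizational rather than conceptual: one must be scrupulous about the presentation-length arithmetic in the Tor long exact sequence --- in particular, how the length of $R_i\otimes_R K$ is pinned down by those of $\Tor_1^R(R_i,M)$ and of the syzygy $L_i$ --- and about the order of operations, since property $(*)$ cannot be invoked before $M$ is known to be finitely presented, which is exactly why the preliminary promotion of finite generation to finite presentation via Lemma \ref{finitness} is essential and not merely cosmetic. Everything else reduces to routine diagram chases in the Tor long exact sequence.
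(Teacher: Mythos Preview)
Your argument is correct and follows the same inductive scheme as the paper's proof: base case via Lemma \ref{finitness}, inductive step by d\'evissage through the Tor long exact sequence of a finite free presentation. Two refinements in your version are worth flagging. First, your remark that the ``only if'' direction is immediate from the standing hypothesis $^{R_i\otimes}\X_n\subseteq\Y^i_n$ is sharper than the paper, which runs the induction for both directions simultaneously. Second, your preliminary promotion of $M$ from finitely generated to finitely presented (via an arbitrary free cover with kernel $\widetilde K$) before invoking property $(*)$ fills a gap the paper leaves implicit: the paper's converse reuses the presentation $0\to K\to R^k\to M\to 0$ with $K\in\X$ coming from property $(*)$, but property $(*)$ is only available once $M\in\X_1$, which is exactly what your intermediate step secures; likewise, your explicit standing hypothesis $M\in\X$ for the ``if'' direction is needed and is not addressed in the paper.
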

\begin{proof}
 We use the induction on $n$.
  The case $n=0$ follows easily from Lemma \ref{finitness}.\\
  Now, assume that $\Tor_j^R(R_i,M)$ is in $\Y_{n-j}^i$ for $1\leq j\leq n$ and $i=1,2$.\\
  Let $M$ is in $\X_n$. Then, we have an exact sequence of $R$-modules
 $$0\longrightarrow K \stackrel{i}{\longrightarrow}R^{k} \stackrel{f}{\longrightarrow} M\longrightarrow 0.   \ \ \ \ \ \ \  (a)$$
  We have that $\Tor_j^R(R_i,K)\cong\Tor^R_{j+1}(R_i,M)$ for any $j\geq1$. So, $R_i\otimes K$ is in $\Y_{n-1}^i$.\\
  Now, we tensor the short exact sequence (a) with $R_i$ over $R$ and we obtain the following two exact sequences:
  $$0\longrightarrow K_i\longrightarrow R_i^k\longrightarrow R_i\otimes_R M\longrightarrow0 $$
and
 $$0\longrightarrow\Tor_1^R(R_i,M)\longrightarrow R_i\otimes_R K\longrightarrow R_i\otimes_R K_i\longrightarrow0 $$
with $K_i=\ker(\mathds{1}_{R_i}\otimes f)$.\\
From the previous two exact sequences, we can deduce that $K_i$ is in $\Y_{n-1}^i$ which implies that $R_i\otimes_R M$ is in $\Y_{n}^i$ for $i=1,2$.\\
For the converse, suppose  that $R_i\otimes_R M$ is in $\Y_{n}^i$, then $K_i$ is in $\Y_{n-1}^i$. And so $R_i\otimes K$ is in $\Y_{n-1}^i$. Hence $M$ is in $\X_n$.
\end{proof}
The following theorem generalizes \cite[Theorem 4]{Susana}.
\begin{thm}\label{Thm-Pullback-Coherence}
Let $M$ be an $R$-module and let $\X$ be a class of $R$-modules satisfying the property $(*)$ and let $\Y^i$ be a class of $R_i$-modules  such that $^{R_i\otimes}\X_k$ is a subclass of $\Y^{i}_k$ for every integer $k$ and $i=1,2$.\\
Suppose that for each $M\in \X_n$, $\Tor_j^R(R_i,M)$ is in $\Y_{n+1-j}^i$ for $1\leq j\leq n+1$ and $i=1,2$.
Then, $R$ is $n$-$\X$-coherent if $R_i$ is $n$-$\Y^i$-coherent.
\end{thm}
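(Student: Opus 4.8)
The plan is to deduce the $n$-$\X$-coherence of $R$ from Proposition~\ref{prop-finiteness-pullback} essentially by feeding it the right data. Let $I \in \X_{n-1}$ be an $(n-1)$-presented $R$-module in $\X$; we must show $I \in \X_n$. First I would invoke the property $(*)$ (applicable since $n-1 \geq 1$, the case $n=0$ being Noetherian and handled separately, or trivially if $\X_{n-1}$ already sits inside $\X_n$ for degenerate $n$) to obtain an exact sequence $0 \to K \to R^k \to I \to 0$ with $K \in \X$ and $K \in \X_{n-2}$. It then suffices to prove $K \in \X_{n-1}$, for then the exact sequence exhibits $I$ as $n$-presented. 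Equivalently, using the long exact $\Tor$ sequence which gives $\Tor^R_j(R_i,K) \cong \Tor^R_{j+1}(R_i,I)$ for $j \geq 1$ together with $\Tor^R_0(R_i,K) = R_i \otimes_R K$, I would reduce the whole problem to showing that $R_i \otimes_R K \in \Y^i_{n-1}$ for $i=1,2$; Proposition~\ref{prop-finiteness-pullback} (applied to $K$ at level $n-1$) then yields $K \in \X_{n-1}$, provided its $\Tor$-hypothesis holds for $K$, which follows from the hypothesis on $I$ via the dimension shift.

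The heart of the argument is therefore: show $R_i \otimes_R K \in \Y^i_{n-1}$. Tensoring $0 \to K \to R^k \to I \to 0$ with $R_i$ over $R$ produces (as in the proof of Proposition~\ref{prop-finiteness-pullback}) the exact sequences
\begin{displaymath}
0 \longrightarrow \Tor^R_1(R_i,I) \longrightarrow R_i \otimes_R K \longrightarrow \ker(\mathds{1}_{R_i}\otimes f) \longrightarrow 0
\end{displaymath}
and $0 \to \ker(\mathds{1}_{R_i}\otimes f) \to R_i^k \to R_i \otimes_R I \to 0$. By hypothesis $\Tor^R_1(R_i,I) \in \Y^i_n \subseteq \Y^i_{n-1}$ (the hypothesis on $\X_n$ gives control of $\Tor^R_j(R_i,I)$ in degrees $1 \le j \le n+1$). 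Now apply Proposition~\ref{prop-finiteness-pullback} to $I$ itself at level $n$: since $R$ is... — no, rather: since we are trying to \emph{prove} $I \in \X_n$, I cannot assume it. So instead I would argue on the image side directly. Here is the clean route: since $R_i$ is $n$-$\Y^i$-coherent and $^{R_i\otimes}\X_{n-1}$ is a subclass of $\Y^i_{n-1}$, the module $R_i \otimes_R I$, being in $\Y^i_{n-1}$ (as $I \in \X_{n-1}$ and $R_i$ is flat over... — careful: flatness of $R_i$ over $R$ is \emph{not} assumed), lies in $\Y^i_n$. More precisely, $I \in \X_{n-1}$ means $I$ admits a finite free $(n-1)$-presentation over $R$; tensoring the first $n-1$ free steps with $R_i$ shows $R_i\otimes_R I$ has a finite free presentation of length $n-1$ over $R_i$, i.e. $R_i \otimes_R I \in \Y^i_{n-1}$ because $^{R_i\otimes}(-)$ of each syzygy stays in $\Y^i$. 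By $n$-$\Y^i$-coherence, $R_i\otimes_R I \in \Y^i_n$, hence $\ker(\mathds{1}_{R_i}\otimes f) \in \Y^i_{n-1}$ from the second exact sequence above; combined with $\Tor^R_1(R_i,I) \in \Y^i_{n-1}$ and the first exact sequence — using that $\Y^i_{n-1}$ is closed under extensions, which holds since $\Y^i$ is closed under direct summand and kernels of epimorphisms in the spirit of Proposition~\ref{thickness}, or more safely by re-running Proposition~\ref{prop-finiteness-pullback}'s internal bookkeeping — we obtain $R_i \otimes_R K \in \Y^i_{n-1}$, as required.

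Finally, having $R_i \otimes_R K \in \Y^i_{n-1}$ for $i=1,2$ and $\Tor^R_j(R_i,K) \cong \Tor^R_{j+1}(R_i,I) \in \Y^i_{n-j}$ for $1 \le j \le n$ (by the dimension-shift and the $\Tor$-hypothesis on $I \in \X_{n-1}\subseteq$ the level-$n$ hypothesis, after adjusting indices), Proposition~\ref{prop-finiteness-pullback} applied to $K$ at level $n-1$ gives $K \in \X_{n-1}$. Then the sequence $0 \to K \to R^k \to I \to 0$ with $K \in \X_{n-1}$ and $R^k$ finitely generated free shows $I$ is $n$-presented, so $I \in \X_n$. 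Since $I \in \X_{n-1}$ was arbitrary and $\X_{n-1}$ is nonempty (it contains, e.g., the $R_i$-coherence data pulled back, or one argues $\X_{n-1}\neq\varnothing$ is part of what $n$-$\Y^i$-coherence of $R_i$ forces through the hypotheses), $R$ is $n$-$\X$-coherent. The main obstacle I anticipate is the bookkeeping around the $\Tor$-degree hypotheses — matching ``$\Tor^R_j(R_i,M)\in\Y^i_{n+1-j}$ for $M\in\X_n$'' with what one actually needs for the submodule $K$ at level $n-1$ — and the fact that $R_i$ is not assumed $R$-flat, so every step that tensors a presentation must be accompanied by a $\Tor$-vanishing or a $\Tor$-membership bound rather than exactness; this is exactly why the hypothesis is phrased in terms of $\Tor$-membership in the $\Y^i_{\bullet}$ classes.
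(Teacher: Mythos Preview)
Your argument has a genuine gap rooted in an off-by-one reading of the definition. In this paper the \emph{ring} notion is shifted relative to the module notion: $R$ is $n$-$\X$-coherent means that every module in $\X_n$ lies in $\X_{n+1}$ (see the paragraph after the definition). You instead start with $I\in\X_{n-1}$ and try to land in $\X_n$. This is not merely cosmetic: the $\Tor$-hypothesis of the theorem is stated only for $M\in\X_n$, so when you write ``by hypothesis $\Tor^R_1(R_i,I)\in\Y^i_n$'' and later ``the $\Tor$-hypothesis on $I\in\X_{n-1}\subseteq$ the level-$n$ hypothesis'', you are invoking the hypothesis on a module that is only known to lie in $\X_{n-1}$, not in $\X_n$. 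Since $\X_n\subseteq\X_{n-1}$ (and not the reverse), nothing in the statement gives you control over $\Tor^R_j(R_i,I)$ for such $I$, and the whole chain that follows (the dimension shift to $K$, the application of Proposition~\ref{prop-finiteness-pullback} to $K$) collapses. A second, independent gap is your appeal to ``$\Y^i_{n-1}$ is closed under extensions''; no such closure is assumed on the classes $\Y^i$, and Proposition~\ref{thickness} does not supply it for you here.

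Once you use the correct definition, the paper's proof is a two-line application of Proposition~\ref{prop-finiteness-pullback} and avoids all of your internal unpacking. Take $M\in\X_n$. The $\Tor$-hypothesis gives $\Tor^R_j(R_i,M)\in\Y^i_{n+1-j}\subseteq\Y^i_{n-j}$ for $1\le j\le n$, so Proposition~\ref{prop-finiteness-pullback} at level $n$ yields $R_i\otimes_R M\in\Y^i_n$. The $n$-$\Y^i$-coherence of $R_i$ then gives $R_i\otimes_R M\in\Y^i_{n+1}$. Now the very same $\Tor$-hypothesis, read at level $n+1$ (namely $\Tor^R_j(R_i,M)\in\Y^i_{(n+1)-j}$ for $1\le j\le n+1$), lets you apply Proposition~\ref{prop-finiteness-pullback} in the converse direction at level $n+1$, so $M\in\X_{n+1}$. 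There is no need to pass to a syzygy $K$, no need for extension-closure of $\Y^i_{n-1}$, and no worry about flatness of $R_i$: Proposition~\ref{prop-finiteness-pullback} already absorbs all the $\Tor$-bookkeeping you were trying to redo by hand.
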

\begin{proof}
  Let $M$ be an $R$-module in $\X_n$. By Proposition \ref{prop-finiteness-pullback}, $R_i\otimes_R M$ is in $\Y_n^i$. Then, by the coherence of $R_i$, $R_i\otimes_R M$ is in $\Y_{n+1}^i$. Again, by Proposition \ref{prop-finiteness-pullback}, $M$ is in $\X_{n+1}$, and hence $R$ is $n$-$\X$-coherent.
\end{proof}
\begin{cor}\label{Cor-Pullback-Coh-Ideals}
Let $M$ be an $R$-module and let $\X$ be a class of $R$-modules satisfying the property $(*)$ and let $\Y^i$ be a class of $R_i$-modules such that $^{R_i\otimes}\X_k$ is a subclass of $\Y^{i}_k$ for every integer $k$ and $i=1,2$.\\ Suppose that $R_i$ is $n$-$\Y^i$-coherent.
Then, $R$ is $n$-$\X$-coherent if and only if for all $I\in \X_n$,  $\Tor_j^R(R_i,\frac{R}{I})$ is in $\Y_{n+1-j}^i$ for $1\leq j\leq n+1$ and $i=1,2$.
\end{cor}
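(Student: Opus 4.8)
The plan is to deduce this corollary from Theorem \ref{Thm-Pullback-Coherence} together with a small bookkeeping argument that reduces the $n$-$\X$-coherence of $R$ (which, by definition for a ring, concerns passing from $\X_n$ to $\X_{n+1}$ on arbitrary modules of $\X_n$) to the special modules of the form $R/I$ with $I\in\X_n$. First I would observe that the hypothesis "$R_i$ is $n$-$\Y^i$-coherent for $i=1,2$" is assumed outright here, so the content is purely the equivalence between $n$-$\X$-coherence of $R$ and the stated $\Tor$-vanishing-type condition on the quotients $R/I$.

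For the "only if" direction I would argue as follows. Assume $R$ is $n$-$\X$-coherent and let $I\in\X_n$. Then $I\in\X_{n+1}$, i.e. $I$ is $(n+1)$-presented, hence $R/I$ is $(n+1)$-presented as well (splice a finite free presentation of $I$ with $0\to I\to R\to R/I\to 0$); so $R/I\in\X_{n+1}$ provided $R/I$ lies in $\X$ — here I would invoke the closure hypotheses available on $\X$, or more cleanly work with the module $I$ itself and the short exact sequence $0\to K\to R^k\to I\to 0$ with $K\in\X$ furnished by property $(*)$, noting $K\in\X_n$. The point is to produce, from $(n+1)$-presentedness, control of $\Tor_j^R(R_i,R/I)$: since $R/I$ is $(n+1)$-presented one can compute these $\Tor$'s from a partial finite free resolution of length $n+1$, and the syzygies appearing are $k$-presented $R$-modules in $\X$ for the appropriate $k$; tensoring with $R_i$ and using $^{R_i\otimes}\X_k\subseteq\Y^i_k$ shows $\Tor_j^R(R_i,R/I)\in\Y^i_{n+1-j}$ for $1\le j\le n+1$. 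This is essentially a dimension-shifting computation along a finite free resolution, exactly in the spirit of the proof of Proposition \ref{prop-finiteness-pullback}.

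For the "if" direction I would apply Theorem \ref{Thm-Pullback-Coherence} almost verbatim, but I must bridge the gap between its hypothesis — $\Tor_j^R(R_i,M)\in\Y^i_{n+1-j}$ for \emph{all} $M\in\X_n$ — and the present weaker hypothesis, which only controls $\Tor$ of the quotients $R/I$. The reduction is the standard one: for $M\in\X_n$, by property $(*)$ choose $0\to K\to R^k\to M\to 0$ with $K\in\X$; then $K\in\X_n$ (using that $\X$ is... — here I would use the closure of $\X$ under kernels of epimorphisms, or the fact that $K$ is $n$-presented because $M$ is), and $\Tor_j^R(R_i,M)\cong\Tor_{j-1}^R(R_i,K)$ for $j\ge2$ while $\Tor_1^R(R_i,M)=\ker(R_i\otimes K\to R_i\otimes R^k)$ is a submodule of $R_i\otimes K$. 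Iterating, one expresses every $\Tor_j^R(R_i,M)$ through syzygies, and each syzygy module, being a finitely generated submodule of a finite free module, sits inside $\X$ and relates to the quotients $R/I$ in the appropriate degree; so the hypothesis on the $R/I$'s propagates to the needed bound on $\Tor_j^R(R_i,M)$. Once that is in hand, Theorem \ref{Thm-Pullback-Coherence} delivers $n$-$\X$-coherence of $R$.

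The main obstacle, as I see it, is precisely this last reduction — making rigorous that controlling $\Tor_j^R(R_i,R/I)$ for $I\in\X_n$ forces control of $\Tor_j^R(R_i,M)$ for every $M\in\X_n$. The cleanest route is dimension shifting: write $M\in\X_n$ via $0\to I'\to R^k\to M\to 0$ so that $\Tor_j^R(R_i,M)$ for $j\ge 1$ is governed by $R_i\otimes I'$ and lower $\Tor$'s of $I'$, and then relate $I'$ (a submodule of $R^k$, hence expressible through the $R/I$-type quotients) back to the hypothesis; one will need the closure properties of $\X$ assumed elsewhere in the section (closure under finite sums / kernels of epimorphisms) to keep all intermediate modules inside $\X$ and inside the right $\X_k$. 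Apart from that, everything is a direct appeal to Proposition \ref{prop-finiteness-pullback} and Theorem \ref{Thm-Pullback-Coherence}.
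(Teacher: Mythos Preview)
Your overall strategy coincides with the paper's: one direction by iterated dimension shifting along short exact sequences, the other by invoking Theorem~\ref{Thm-Pullback-Coherence}. Two remarks on the comparison.

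First, the paper's opening sentence has the labels reversed: it says the ``only if'' part follows from Theorem~\ref{Thm-Pullback-Coherence} and then proceeds to \emph{prove} the ``only if'' direction (assuming $R$ is $n$-$\X$-coherent and deriving the $\Tor$ condition). So the direction the paper actually works out by hand is the same one you sketch as ``only if''. Your sketch is correct in spirit but looser than the paper's: where you speak generically of ``computing $\Tor$'s from a partial finite free resolution'', the paper explicitly tensors $0\to I\to R\to R/I\to 0$ with $R_i$, splits the result into the two short exact sequences $0\to H_i\to R_i\to R_i\otimes_R R/I\to 0$ and $0\to\Tor_1^R(R_i,R/I)\to R_i\otimes_R I\to H_i\to 0$, reads off that $\Tor_1^R(R_i,R/I)\in\Y^i_n$, and then iterates with $0\to P\to R^s\to I\to 0$ (furnished by property~$(*)$) to climb to higher $\Tor$'s. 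That concrete bookkeeping is what your sketch is gesturing at; you should make it explicit.

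Second, for the direction you call ``if'' (Tor condition $\Rightarrow$ coherence), the paper simply cites Theorem~\ref{Thm-Pullback-Coherence} with no further comment. You are right to be uneasy here: Theorem~\ref{Thm-Pullback-Coherence} asks for $\Tor_j^R(R_i,M)\in\Y^i_{n+1-j}$ for all $M\in\X_n$, while the corollary only supplies control of $\Tor_j^R(R_i,R/I)$. However, your diagnosis of the gap is slightly off. In this corollary $\X$ is (implicitly) a class of ideals, so every $M\in\X_n$ already \emph{is} an $I$; the mismatch is not ``general $M$ versus $R/I$'' but an \emph{index shift}: $\Tor_j^R(R_i,R/I)\cong\Tor_{j-1}^R(R_i,I)$ for $j\ge 2$, so the hypothesis yields $\Tor_k^R(R_i,I)\in\Y^i_{n-k}$ rather than the $\Y^i_{n+1-k}$ demanded by Theorem~\ref{Thm-Pullback-Coherence}. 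Your proposed bridge via syzygies does not close this one-degree gap, and the paper does not close it either---it simply asserts the citation. So your proposal is essentially the paper's argument, with the added (and justified) observation that the appeal to Theorem~\ref{Thm-Pullback-Coherence} is not literally immediate; but your attempted repair does not go through as written.
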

\begin{proof}
    The only if assertion follows from Theorem \ref{Thm-Pullback-Coherence}, we will prove the converse.\\
  Let $I\in\X_n$, then we have an exact sequence of the form
 $$0\longrightarrow I \longrightarrow R \stackrel{\pi}{\longrightarrow} \frac{R}{I}\longrightarrow 0 \ \ \ \ \ \ \ \ \ \ \  \ \ \ \ \  \ \ \ \ \ \  \ \  (1)$$
  Tensoring the sequence (1) with $R_i$ $(i=1,2)$ over $R$ and put $H_i=\ker(\mathds{1}_{R_i}\otimes\pi)$, we obtain two exact sequences
$$0\longrightarrow H_i \longrightarrow R_i\stackrel{\mathds{1}_{R_{i}}\otimes\pi}{\longrightarrow} R_i\otimes_R \frac{R}{I} \longrightarrow 0 \ \ \ \ \ \  \ \ \ \ \ \ \ \ \  \ \ \ (2)$$
and
$$0\longrightarrow {\rm Tor}_1^R(R_i,\frac{R}{I}) \longrightarrow R_i\otimes_R I \longrightarrow H_i \longrightarrow 0. \ \ \ \ \ \ \ \ \ \ (3)$$
Using the coherence of $R$ and the exactness of the sequences (1), (2) and (3), we get that $\Tor_1^R(R_i,\frac{R}{I})$ is in $\Y^i_n$.\\
Now, since $I$ is in $\X_n$, we have an exact sequence
$$0\longrightarrow P\longrightarrow R^s \longrightarrow I \longrightarrow 0.$$
Using a similar argument, we get that $\Tor_1^R(R_i,I)$ is in $\Y^i_{n-1}$, and hence $\Tor_2^R(R_i,\frac{R}{I})$ is also in $\Y^i_{n-1}$.
Since $P$ is in $\X_{n-1}$, we have an exact sequence
$$0\longrightarrow P_0 \longrightarrow R^r \longrightarrow P \longrightarrow 0.$$
Using a similar argument, we get that $\Tor_1^R(R_i,P)$ is in $\Y^i_{n-2}$, and hence $\Tor_2^R(R_i,I)$ is also in $\Y^i_{n-2}$.
Iterating with the same argument, we get that for each $I\in \X_n$,  $\Tor_j^R(R_i,\frac{R}{I})$ is in $\Y_{n+1-j}^i$ for $1\leq j\leq n+1$ and $i=1,2$.
\end{proof}
Now, we establish necessary and sufficient conditions for the coherence of the pullback diagram.
\begin{thm}\label{Thm-Pullback-Coherence2}
Let  $\X$ be a class of $R$-modules satisfying the property $(*)$ and let $\Y^i$ be a class of $R_i$-modules for $i=1,2$.\\
Suppose that for each $M\in \X_n$,  $\Tor_j^R(R_i,M)$ is in $\Y_{n+1-j}^i$ for $1\leq j\leq n+1$ and $i=1,2$. Also, let for any module in $Y_i\in\Y^i_n$, there exists a module $X_i\in\X_n$ such that $R_i\otimes_R X_i\simeq Y_i$.
Then, $R$ is $n$-$\X$-coherent if and only if  $R_i$ is $n$-$\Y^i$-coherent.
\end{thm}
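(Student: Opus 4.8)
The plan is to prove the two implications of the equivalence separately. For sufficiency --- that $R$ is $n$-$\X$-coherent whenever each $R_i$ is $n$-$\Y^i$-coherent --- I would simply appeal to Theorem \ref{Thm-Pullback-Coherence}: the hypothesis imposed here on the modules $\Tor_j^R(R_i,M)$ for $M\in\X_n$ is exactly the one required there, so that theorem applies without modification (here, as in Theorem \ref{Thm-Pullback-Coherence} and Proposition \ref{prop-finiteness-pullback}, one uses the comparison $^{R_i\otimes}\X_k\subseteq\Y^i_k$). Hence the real content of the statement is the converse.

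For necessity, suppose $R$ is $n$-$\X$-coherent, fix an index $i\in\{1,2\}$, and take an arbitrary $Y\in\Y^i_n$; I must show $Y\in\Y^i_{n+1}$. First I would use the new surjectivity hypothesis to choose $X\in\X_n$ with $R_i\otimes_R X\simeq Y$. Since $R$ is $n$-$\X$-coherent and $X\in\X_n$, it follows that $X\in\X_{n+1}$. Now I would invoke Proposition \ref{prop-finiteness-pullback} with $n+1$ in place of $n$ and $X$ in place of $M$. This is legitimate: because $X$ still lies in $\X_n$, the theorem's Tor hypothesis yields $\Tor_j^R(R_t,X)\in\Y^t_{n+1-j}$ for $1\le j\le n+1$ and $t=1,2$, which is precisely the hypothesis needed to run Proposition \ref{prop-finiteness-pullback} at level $n+1$. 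Its conclusion states that $X\in\X_{n+1}$ if and only if $R_t\otimes_R X\in\Y^t_{n+1}$ for $t=1,2$; since we already know $X\in\X_{n+1}$, we obtain $R_i\otimes_R X\in\Y^i_{n+1}$, that is, $Y\in\Y^i_{n+1}$. As $Y\in\Y^i_n$ and $i\in\{1,2\}$ were arbitrary, this shows that $R_i$ is $n$-$\Y^i$-coherent for $i=1,2$, finishing the proof.

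The point I expect to demand the most care is the index shift: Proposition \ref{prop-finiteness-pullback} has to be applied at level $n+1$, not $n$, and it is exactly for this reason that the Tor hypothesis of the present theorem is stated with the range $1\le j\le n+1$ --- one step longer than the range in Proposition \ref{prop-finiteness-pullback} itself. A secondary, purely cosmetic, subtlety is that Proposition \ref{prop-finiteness-pullback} produces information about $R_1\otimes_R X$ and $R_2\otimes_R X$ at once, whereas only the $i$-th component is needed; this causes no difficulty. Apart from these bookkeeping matters the argument is a short passage through the surjectivity hypothesis, the $n$-$\X$-coherence of $R$, and Proposition \ref{prop-finiteness-pullback}, and requires no new commutative diagram.
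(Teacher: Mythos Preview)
Your proposal is correct and follows essentially the same route as the paper: one implication is Theorem \ref{Thm-Pullback-Coherence}, and for the other you pick $Y\in\Y^i_n$, lift it to $X\in\X_n$ via the surjectivity hypothesis, use $n$-$\X$-coherence of $R$ to get $X\in\X_{n+1}$, and conclude $Y\simeq R_i\otimes_R X\in\Y^i_{n+1}$. The paper asserts this last step without justification, whereas you supply it by invoking Proposition \ref{prop-finiteness-pullback} at level $n+1$ and checking that the Tor hypothesis (with range $1\le j\le n+1$) is exactly what is needed there; your observation that the containment $^{R_i\otimes}\X_k\subseteq\Y^i_k$ is tacitly being used is also well taken.
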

\begin{proof}
 The direct sense of the equivalence is proved in Theorem \ref{Thm-Pullback-Coherence}.
  For the converse, let $N_i$ be a $R_i$-module of $\Y^{i}_n$, $(i=1,2)$. By hypothesis, $N_i=R_i\otimes_R N^{'}_i$, where $N^{'}_i$ is in $\X_n$. Then, by the coherence of $R$, $N^{'}_i$ is in $\X_{n+1}$. Hence, $N_i$ is in $\Y^{i}_{n+1}$ for $i=1,2$.
\end{proof}
\section*{Acknowledgment}
We thank the referee(s) for helpful suggestions on an earlier version of this paper.\\
Arij Benkhadra's research reported in this publication was supported by a scholarship from the Graduate Research Assistantships in Developing Countries Program of the Commission for Developing Countries of the International Mathematical Union.

\end{document}